%
%
%
\documentclass{amsproc}

\usepackage{stmaryrd}
\usepackage{graphicx}
\usepackage{amssymb}
\usepackage{amscd}
\usepackage{amsmath}
\usepackage{amsfonts}
\usepackage{amsbsy}
\usepackage{epsfig}
\usepackage{tikz}
\usepackage{cite}
\usepackage{indentfirst}
\usepackage{multicol}
\usepackage{wasysym}
\usepackage{bbm}
\usepackage{hyperref}
\usepackage{longtable,booktabs}
\usepackage{bm}

\newtheorem{thm}{Theorem}[section]%
\newtheorem{cor}[thm]{Corollary}%
\theoremstyle{definition}
\newtheorem{defi}[thm]{Definition}%
\newtheorem{pro}[thm]{Proposition}%
\theoremstyle{remark}
\newtheorem{rem}[thm]{Remark}%
\newtheorem*{thm A}{Theorem A}
\newtheorem*{thm B}{Theorem B}

\allowdisplaybreaks[4]

\numberwithin{equation}{section}



\begin{document}
\title[finite-dimensional Hopf algebras over $H_{b:1}^*$]{ On some classification
of finite-dimensional Hopf algebras over the Hopf algebra $H_{b:1}^*$ of Kashina}

\author{Yiwei Zheng$^*$}
\address{School of Mathematics, Hangzhou Normal University,
Hangzhou 311121, China}
\email{zhengyiwei12@foxmail.com}

\author{Yun Gao}
\address{Department of Mathematics and Statistics
, York University, Toronto M3J 1P3, Canada}
\email{ygao@yorku.ca}

\author{Naihong Hu}
\address{School of Mathematical Sciences, Shanghai Key Laboratory of PMMP, East China Normal University,
Shanghai 200241, China}
\email{nhhu@math.ecnu.edu.cn}
\thanks{$^*$ Corresponding author.}

\author{Yuxing Shi}
\address{College of Mathematics and Information Science, Jiangxi Normal University, Nanchang 330022, China}
\email{blueponder@foxmail.com}

\subjclass[2010]{Primary  16T05; Secondary 16T99}
%
%
\keywords{Semisimple Hopf algebra, Nichols algebra, Yetter-Drinfeld module, classification}

\begin{abstract}
Let $H$ be the dual of $16$-dimensional nontrivial semisimple Hopf
algebra $H_{b:1}$ in the classification work of Kashina \cite{K00}.
We completely determine all finite-dimensional Nichols algebras
satisfying $\mathcal{B}(N)\cong \bigotimes_{i\in
I}\mathcal{B}(N_i)$, where $N=\bigoplus_{i\in I}N_i$, each $N_i$ is
a simple object in $_H^H\mathcal{YD}$.
Under this assumption, we classify all those Hopf algebras of finite-dimensional
growth from the semisimple Hopf algebra $H$ via the relevant Nichols algebras $\mathcal B(N)$.
\end{abstract}

\maketitle

\section{Introduction}

Let $\mathbbm{k}$ be an algebraically closed field of characteristic
zero. This work is committed to classifying finite-dimensional Hopf
algebras over $\mathbbm{k}$ with a nontrivial
semisimple Hopf algebra (here ``nontrivial'' means not those group
algebras or their duals) as the coradical.
Until now, there are few classification results on such Hopf
algebras, except \cite{AGM15}, \cite{S16}, \cite{Z}, \cite{Z2}, \cite{S20}.

In this paper, we will use the lifting  method observed by
Andruskiewitsch and Schneider in their construction for a class of
finite-dimensional pointed Hopf algebras arising from quantum linear
spaces \cite{AS}. Actually, the lifting   method works well for
those classes of finite-dimensional Hopf algebras with Radford
biproduct structure or their deformations by certain Hopf
$2$-cocycles. Let $A$ be a Hopf algebra. If the coradical $A_0$ is a
Hopf subalgebra, then the coradical filtration $\{A_n\}_{n\geq 0}$ is a
Hopf algebra filtration, and the associated graded coalgebra
$grA=\bigoplus_{n\geq 0}A_n/A_{n-1}$ is also a Hopf algebra, where
$A_{-1}=0$. Let $\pi:gr A\rightarrow A_0$ be the homogeneous
projection. By a theorem of Radford\ \cite{R85}, there exists a
unique connected graded braided Hopf algebra $R=\bigoplus_{n\geq
0}R(n)$ in the monoidal category $^{A_0}_{A_0}\mathcal{YD}$ such
that $grA\cong R\sharp A_0$. We call $R$ and $R(1)$ the diagram and
infinitesimal braiding of $A$, respectively. Moreover, $R$ is
strictly graded, that is, $R(0)=\mathbbm{k}, ~R(1)=\mathcal{P}(R)$
(see Definition 1.13 \cite{AS2}).

The lifting  method has been applied to classify some
finite-dimensional pointed Hopf algebras such as
\cite{ACG15a},\ \cite{ACG15b},\ \cite{AFG10},\ \cite{AFG11},\ \cite{AHS},\ \cite{AS3},\ \cite{FG},\ \cite{GGI},
etc., and copointed Hopf algebras \cite{AV},\ \cite{GIV},\ \cite{FGM}, etc.
Nevertheless, there are a few classification results on
finite-dimensional Hopf algebras whose coradical is neither a group
algebra nor the dual of a group algebra, for instance, \cite{AGM15},
\cite{CS}, \cite{GG16}, \cite{HX16}, \cite{HX18}, \cite{S16},
\cite{X19}, \cite{Z}, \cite{Z2}, etc. In particular, the 4th author
made an attempt in \cite{S16} to classify the objects of
finite-dimensional growth from a given nontrivial semisimple Hopf
algebra $A_0=H_8$ via some relevant Nichols algebras $\mathcal B(V)$
derived from its semisimple Yetter-Drinfeld modules $V\in
{^{A_0}_{A_0}\mathcal{YD}}$.

In \cite{AGM15}, authors constructed some examples of Hopf algebras
with the dual Chevalley property by determining all semisimple Hopf
algebras that are Morita-equivalent to finite group algebras. But in
our case, the semisimple Hopf algebra $H_{b:1}^*$ is a
$2$-pseudo-cocycle twist (rather than a $2$-cocycle twist) of some group algebra \cite{K00}, so it is no longer Morita-equivalent to a
group algebra as semisimple algebras. In some sense, the program is a
new attempt at aiming to find and classify the unknown
finite-dimensional Hopf algebras starting with a given {\it
nontrivial} semisimple algebra (rather than a (dual) group algebra).

Now we outline the strategy given by the lifting method. We first fix a finite-dimensional nontrivial semisimple Hopf algebra
$H$. Then we determine those $V$ in $^{H}_{H}\mathcal{YD}$ such that the Nichols algebra $\mathcal{B}(V)$ is finite dimensional and
present $\mathcal{B}(V)$ by generators and relations. Finally, we calculate all possible Hopf algebras $A$ such that $grA \cong \mathcal{B}(V)\sharp H$.
We call $A$ a lifting of $\mathcal{B}(V)$ over $H$.
%
%
%

The present paper is a sequel to \cite{S16}, \cite{Z}, \cite{Z2}. In
this paper, we let $H$ be the dual of $H_{b:1}$, which appeared in
\cite{K00}. For the definition of Hopf algebra $H$, see Definition
\ref{def3.1}. Since $^{H^\ast}_{H^\ast}\mathcal{YD}$ is braided
equivalent to $^H_H\mathcal{YD}$, Nichols algebras in
$^{H^\ast}_{H^\ast}\mathcal{YD}$ are equivalent to Nichols algebras
in $^H_H\mathcal{YD}$. Then we have the same Theorem A as in
\cite{Z} for such a $H$.

%
%
\begin{thm A}{\rm\cite [Theorem A]{Z}}
Let $N=\bigoplus_{i\in I}N_i$, where $N_i$ is a simple module in
$_H^H\mathcal{YD}$. Then the Nichols algebra $\mathcal{B}(N)$
satisfying $\mathcal{B}(N)\cong \bigotimes_{i\in I}\mathcal{B}(N_i)$
is finite-dimensional if and only if $N$ is isomorphic to one of the
following Yetter-Drinfeld modules


$(1)\ \Omega_1(n_1,n_2,n_3,n_4,n_5,n_6,n_7,n_8)\triangleq\bigoplus_{j=1}^8 V_j^{n_j}$ with $\sum_{j=1}^8n_j\geq 1$.

$(2)\ \Omega_2(n_1,n_2,n_3,n_4)\triangleq V_5^{\oplus n_1}\oplus V_6^{\oplus n_2}\oplus V_7^{\oplus n_3}\oplus V_8^{\oplus n_4}\oplus M_1$ with $\sum_{j=1}^4n_j\geq 0$.

$(3)\ \Omega_3(n_1,n_2,n_3,n_4)\triangleq V_1^{\oplus n_1}\oplus V_2^{\oplus n_2}\oplus V_3^{\oplus n_3}\oplus V_4^{\oplus n_4}\oplus M_2$ with $\sum_{j=1}^4n_j\geq 0$.

$(4)\ \Omega_4(n_1,n_2,n_3,n_4)\triangleq V_1^{\oplus n_1}\oplus V_2^{\oplus n_2}\oplus V_5^{\oplus n_3}\oplus V_6^{\oplus n_4}\oplus M_3$ with $\sum_{j=1}^4n_j\geq 0$.

$(5)\ \Omega_5(n_1,n_2,n_3,n_4)\triangleq V_1^{\oplus n_1}\oplus V_2^{\oplus n_2}\oplus V_7^{\oplus n_3}\oplus V_8^{\oplus n_4}\oplus M_4$ with $\sum_{j=1}^4n_j\geq 0$.

$(6)\ \Omega_6(n_1,n_2,n_3,n_4)\triangleq V_3^{\oplus n_1}\oplus V_4^{\oplus n_2}\oplus V_7^{\oplus n_3}\oplus V_8^{\oplus n_4}\oplus M_5$ with $\sum_{j=1}^4n_j\geq 0$.

$(7)\ \Omega_7(n_1,n_2,n_3,n_4)\triangleq V_3^{\oplus n_1}\oplus V_4^{\oplus n_2}\oplus V_5^{\oplus n_3}\oplus V_6^{\oplus n_4}\oplus M_6$ with $\sum_{j=1}^4n_j\geq 0$.

$(8)\ \Omega_{8}(n_1,n_2,n_3,n_4)\triangleq V_1^{\oplus n_1}\oplus V_2^{\oplus n_2}\oplus V_3^{\oplus n_3}\oplus V_4^{\oplus n_4}\oplus M_7$ with $\sum_{j=1}^4n_j\geq 0$.

$(9)\ \Omega_{9}(n_1,n_2,n_3,n_4)\triangleq V_5^{\oplus n_1}\oplus V_6^{\oplus n_2}\oplus V_7^{\oplus n_3}\oplus V_8^{\oplus n_4}\oplus M_{8}$ with $\sum_{j=1}^4n_j\geq 0$.

$(10)\ \Omega_{10}(n_1,n_2,n_3,n_4)\triangleq V_3^{\oplus n_1}\oplus V_4^{\oplus n_2}\oplus V_7^{\oplus n_3}\oplus V_8^{\oplus n_4}\oplus M_{9}$ with $\sum_{j=1}^4n_j\geq 0$.

$(11)\ \Omega_{11}(n_1,n_2,n_3,n_4)\triangleq V_3^{\oplus n_1}\oplus V_4^{\oplus n_2}\oplus V_5^{\oplus n_3}\oplus V_6^{\oplus n_4}\oplus M_{10}$ with $\sum_{j=1}^4n_j\geq 0$.

$(12)\ \Omega_{12}(n_1,n_2,n_3,n_4)\triangleq V_1^{\oplus n_1}\oplus V_2^{\oplus n_2}\oplus V_5^{\oplus n_3}\oplus V_6^{\oplus n_4}\oplus M_{11}$ with $\sum_{j=1}^4n_j\geq 0$.

$(13)\ \Omega_{13}(n_1,n_2,n_3,n_4)\triangleq V_1^{\oplus n_1}\oplus V_2^{\oplus n_2}\oplus V_7^{\oplus n_3}\oplus V_8^{\oplus n_4}\oplus M_{12}$ with $\sum_{j=1}^4n_j\geq 0$.

$(14)\ \Omega_{14}\triangleq M_1\oplus M_1$, $\Omega_{15}\triangleq M_1\oplus M_2$, $\Omega_{16}\triangleq M_1\oplus M_7$, $\Omega_{19}\triangleq M_3\oplus M_3$,

\hspace{2.2em}$\Omega_{20}\triangleq M_3\oplus M_5$, $\Omega_{21}\triangleq M_3\oplus M_9$, $\Omega_{22}\triangleq M_4\oplus M_4$, $\Omega_{23}\triangleq M_4\oplus M_6$,

\hspace{2.2em}$\Omega_{29}\triangleq M_7\oplus M_7$, $\Omega_{30}\triangleq M_7\oplus M_8$, $\Omega_{38}\triangleq M_{13}\oplus M_{13}$, $\Omega_{39}\triangleq M_{13}\oplus M_{14}$,

\hspace{2.2em}$\Omega_{41}\triangleq M_{15}\oplus M_{15}$, $\Omega_{42}\triangleq M_{15}\oplus M_{16}$, $\Omega_{44}\triangleq M_{17}\oplus M_{17}$, $\Omega_{45}\triangleq M_{17}\oplus M_{18}$.

$(15)\ \Omega_{17}\triangleq M_2\oplus M_2$, $\Omega_{18}\triangleq M_2\oplus M_8$, $\Omega_{24}\triangleq M_4\oplus M_{10}$, $\Omega_{25}\triangleq M_5\oplus M_5$,

\hspace{2em}$\Omega_{26}\triangleq M_5\oplus M_{11}$, $\Omega_{27}\triangleq M_6\oplus M_6$, $\Omega_{28}\triangleq M_6\oplus M_{12}$, $\Omega_{31}\triangleq M_8\oplus M_8$,

\hspace{2em}$\Omega_{32}\triangleq M_9\oplus M_9$, $\Omega_{33}\triangleq M_9\oplus M_{11}$, $\Omega_{34}\triangleq M_{10}\oplus M_{10}$, $\Omega_{35}\triangleq M_{10}\oplus M_{12}$,

\hspace{2em}$\Omega_{36}\triangleq M_{11}\oplus M_{11}$, $\Omega_{37}\triangleq M_{12}\oplus M_{12}$, $\Omega_{40}\triangleq M_{14}\oplus M_{14}$, $\Omega_{43}\triangleq M_{16}\oplus M_{16}$,

\hspace{2em}$\Omega_{46}\triangleq M_{18}\oplus M_{18}$, $\Omega_{47}\triangleq M_{19}\oplus M_{19}$,
$\Omega_{48}\triangleq M_{19}\oplus M_{20}$,
$\Omega_{49}\triangleq M_{20}\oplus M_{20}$.
\end{thm A}
%
%

However, the crucial point of present paper focuses on the different
lifting procedures, for $H$ is not isomorphic to its dual Hopf
algebra $H_{b:1}$ in \cite{Z}. Based on the lifting  method, we
classify the finite-dimensional Hopf algebras over $H$ such that
their infinitesimal braidings are those Yetter-Drinfeld modules
listed in Theorem A. Here are our main classification results,
surprisingly, which are different than those listed in Theorem B \cite{Z}.

\begin{thm B}
Let $A$ be a finite-dimensional Hopf algebra over $H$ such that its
infinitesimal braiding is isomorphic to one of the Yetter-Drinfeld modules listed in Theorem A, then $A$ is isomorphic either to

$(1)~\mathfrak{U}_1(n_1,n_2,\ldots,n_8;I_1)$, see Definition $\ref{def 1}$;

$(2)~\mathfrak{U}_2(n_1,n_2,n_3,n_4;I_2)$, see Definition $\ref{def 2}$;

$(3)~\mathfrak{U}_9(n_1,n_2,n_3,n_4;I_9)$, see Definition $\ref{def 9}$;

$(4)~\mathfrak{U}_{14}(I_{14})$, see Definition $\ref{def 14}$; $\mathfrak{U}_{15}(I_{15})$, see Definition $\ref{def 15}$;

\hspace{1.6em}$\mathfrak{U}_{16}(\lambda)$, see Definition $\ref{def 16}$;
$\mathfrak{U}_{20}(\lambda)$, see Definition $\ref{def 20}$;

\hspace{1.6em}$\mathfrak{U}_{23}(\lambda)$, see Definition $\ref{def 23}$;
$\mathfrak{U}_{29}(\lambda)$, see Definition $\ref{def 29}$;

\hspace{1.6em}$\mathfrak{U}_{38}(I_{38})$, see Definition $\ref{def 38}$;
$\mathfrak{U}_{39}(I_{39})$, see Definition $\ref{def 39}$;

\hspace{1.6em}$\mathfrak{U}_{41}(I_{41})$, see Definition $\ref{def 41}$;
$\mathfrak{U}_{42}(I_{42})$, see Definition $\ref{def 42}$;

\hspace{1.6em}$\mathfrak{U}_{44}(I_{44})$, see Definition $\ref{def 44}$;
$\mathfrak{U}_{45}(I_{45})$, see Definition $\ref{def 45}$.

$(5)$ $\mathcal{B}(V)\sharp H$ for $V\in\{\Omega_{4},\ \Omega_{5},\ \Omega_{19},
\ \Omega_{21},\ \Omega_{22},\ \Omega_{30}\}$, see Proposition $\ref{19}$.
\end{thm B}

Notice that except for case
$(5)$, the remainder  families of Hopf algebras contain non-trivial lifting relations.
It is interesting to note the following distinct points between Theorem B aforementioned and Theorem B in \cite{Z}.

\begin{rem}
$(1)$ $\dim \mathfrak{U}_1(n_1,n_2,\ldots,n_8;I_1)=2^{4+\sum\limits _{i=1}^8 n_i}.$

$(2)$ For $k\in\{2,9\}$, $\dim \mathfrak{U}_k(n_1,n_2,n_3,n_4;I_k)=2^{6+\sum\limits _{i=1}^4 n_i}$.

$(3)$ For $i\in\{14,15,38,39,41,42,44,45\}$,
$\dim \mathfrak{U}_{i}(I_{i})=256$.

$(4)$ For $i\in\{16,20,23,29\}$,
$\dim \mathfrak{U}_{i}(\lambda)=256$.

$(5)$ For $V=\Omega_{39}$, the lifting of $\mathcal{B}(V)$ over $H_{b,1}$ is trivial, the lifting of $\mathcal{B}(V)$ over $H_{b:1}^*$ is non-trivial.

$(6)$ For $V\in\{\Omega_{4},\ \Omega_{5},\ \Omega_{19},
\ \Omega_{21},\ \Omega_{22},\ \Omega_{30}\}$, the lifting of $\mathcal{B}(V)$ over $H_{b,1}$
is non-trivial, the lifting of $\mathcal{B}(V)$ over $H_{b:1}^*$ is trivial.

$(7)$ The Hopf algebras appeared in Theorem B are not isomorphic to those listed in
Theorem B \cite{Z}. The reason is that the both group subalgebras generated by the respective group-like
elements are not isomorphic to each other. These cause the quite different lifting procedures.
\end{rem}

The paper is organized as follows. In section 2, we recall some
basics and notations of Yetter-Drinfeld modules, Nichols algebras,
Radford biproduct. In section $3$, we describe the structure of $H$
and determine the simple objects in $^H_H\mathcal{YD}$ whose Nichols
algebras are finite dimensional. In section 4, based on the lifting
 method, we classify the finite-dimensional Hopf algebras over $H$
such that their infinitesimal braidings are those Yetter-Drinfeld
modules listed in Theorem A. Then we get Theorem B.

\section{Preliminaries}

\noindent$\mathbf{Conventions}$.
Throughout the paper, the ground
field $\mathbbm{k}$ is always supposed to be an algebraically closed field of characteristic zero and, denote by $\xi$ a primitive $4$-th root of
unity. For references on Hopf algebra theory, one can consult
\cite{M}, \cite{CK}, \cite{R}, \cite{SW}, etc.

If $H$ is a Hopf algebra over $\mathbbm{k}$, then $\triangle,~\varepsilon, \ S$ denote the comultiplication, the counit and the antipode, respectively. We use Sweedler's notation for the comultiplication and coaction, $e.g.$, $\triangle(h)=h_{(1)}\otimes h_{(2)}$ for $h\in H$. Denote by $G(H)$ the set of group-like elements of $H$. For $g,\ h\in G(H)$, the linear space of $(g,h)$-primitives is$:$
\begin{equation*}
\mathcal{P}_{g,h}(H)=\{x\in H\mid \triangle(x)=x\otimes g+h\otimes x\}.
\end{equation*}
In particular, the linear space $\mathcal{P}(H)=\mathcal{P}_{1,1}(H)$ is called the set of primitive elements of $H$.

For $k$, $n$ integers such that $0\leq k\leq n,$ we denote $\mathbb{Z}_n=\mathbb{Z}/n\mathbb{Z}$ and $\mathbb{I}_{k,n}=\{k,k+1,...,n\}$.
In particular, the operations $ij$ and $i\pm j$ are considered modulo $n+1$ for $i,\ j\in \mathbb{I}_{k,n}$ when not specified.

\subsection{Yetter-Drinfeld modules and Nichols algebras}
Let $H$ be a Hopf algebra  with bijective antipode. A left Yetter-Drinfeld module $V$ over $H$ is a left $H$-module $(V,\cdot)$ and a left $H$-comodule $(V,\delta)$ with $\delta(v)=v_{(-1)}\otimes v_{(0)}\in H\otimes V$ for all $v\in V$, satisfying
\begin{center}
$\delta(h\cdot v)=h_{(1)}v_{(-1)}S(h_{(3)})\otimes h_{(2)}\cdot v_{(0)},\quad \forall \;v\in V,\ h\in H.$
\end{center}
We denote by $^H_H\mathcal{YD}$ the category of finite-dimensional left Yetter-Drinfeld modules over $H$. It is a braided monoidal category: for $V,\ W \in {^H_H\mathcal{YD}}$, the braiding $c_{V,W}:V\otimes W\rightarrow W\otimes V$ is given by
\begin{equation}
c_{V,W}(v\otimes w)=v_{(-1)}\cdot w\otimes v_{(0)},\hspace{1em}\forall\ v\in V,\ w\in W.\label{braing}
\end{equation}
In particular, $(V,c_{V,V})$ is a braided vector space, that is, $c:= c_{V,V}$ is a linear isomorphism satisfying the braid equation
\begin{equation*}
(c\otimes {\rm id})({\rm id} \otimes c)(c\otimes {\rm id})=({\rm id} \otimes c)(c\otimes {\rm id})({\rm id} \otimes c).
\end{equation*}
Moreover, $^H_H\mathcal{YD}$ is rigid. Since $H$ is finite-dimensional, $^{H^\ast}_{H^\ast}\mathcal{YD}$ is braided equivalent to $^H_H\mathcal{YD}$
(see \cite{AG99}). Let $\{h_i\}_{i\in \mathbb{I}_{0,n}}$ and $\{h^i\}_{i\in \mathbb{I}_{0,n}}$ be the dual bases of $H$ and $H^\ast$, respectively.
If $V\in {^H_H\mathcal{YD}}$, then $V\in {^{H^\ast}_{H^\ast}\mathcal{YD}}$ with
\begin{equation}\label{dual module}
f\cdot v=f(S(v_{(-1)}))v_{(0)},\ \delta(v)=\sum\limits_i S^{-1}(h^i)\otimes h_i\cdot v,\ \ \forall\ v\in V,\ f\in H^\ast.
\end{equation}

\begin{defi}{\rm\cite [Definition 2.1]{AS2}}
~Let $H$ be a Hopf algebra and $V$ a Yetter-Drinfeld module over $H$. A braided $\mathbb{N}$-graded Hopf algebra $R=\bigoplus_{n\geq 0}R(n)$ in $^H_H\mathcal{YD}$ is called a Nichols algebra of $V$ if
\begin{center}
$\mathbbm{k}\simeq R(0),\ V\simeq R(1),\ R(1)=\mathcal{P}(R),\ R$ is generated as an algebra by $R(1)$.
\end{center}

\end{defi}

For any $V\in {^H_H\mathcal{YD}}$ there is a Nichols algebra $\mathcal{B}(V)$ associated to it. It is the quotient of the tensor algebra $T(V)$ by the largest homogeneous two-sided ideal $I$ satisfying:

\ $\bullet\ I$ is generated by homogeneous elements of degree $\geq 2$.

\ $\bullet\ \triangle(I)\subseteq I\otimes T(V)+T(V)\otimes I$, i.e., it is also a coideal.
\\ In such case, $\mathcal{B}(V)=T(V)/I$. See {\rm\cite [Section 2.1]{AS2}} for details.

\begin{rem}\label{rem}
It is well known that the Nichols algebra $\mathcal{B}(V)$ is completely determined, as an algebra and a
coalgebra, by the braided space $V$.
If $W\subseteq V$ is a subspace such that $c(W\otimes W)\subseteq W\otimes W$, then $\dim \mathcal{B}(V)=\infty$ if $\dim \mathcal{B}(W)=\infty$.
In particular, if $V$ contains a non-zero element $v$ such that $c(v\otimes v)=v\otimes v$, then $\dim \mathcal{B}(V)=\infty$.
\end{rem}

\subsection{Bosonization and Hopf algebras with a projection}
Let $R$ be a Hopf algebra in $^H_H\mathcal{YD}$ and denote the coproduct by $\triangle_R(r)=r^{(1)}\otimes r^{(2)}$ for $r\in R$. We define the Radford biproduct or bosonization $R \sharp H$ as follows: as a vector space, $R \sharp H=R\otimes H$, and the multiplication and comultiplication are given by the smash product and smash-coproduct, respectively:
\begin{center}
$(r\sharp g)(s\sharp h)=r(g_{(1)}\cdot s)\sharp g_{(2)}h, \quad \triangle(r\sharp g)=r^{(1)}\sharp (r^{(2)})_{(-1)}g_{(1)}\otimes (r^{(2)})_{(0)}\sharp g_{(2)}$.
\end{center}
Clearly, the map
$\iota: H\rightarrow R \sharp H, \ h\mapsto 1\sharp h, \ \forall \ h\in H$,
is injective and the map
\begin{equation*}
\pi: R \sharp H \rightarrow H, \quad r\sharp h\mapsto \varepsilon_R(r)h, \quad \forall \ r\in R, \ h\in H,
\end{equation*}
is surjective such that $\pi \circ \iota={\rm id}_H$. Moreover, it holds that
\begin{equation*}
R\cong R\sharp 1=(R \sharp H)^{coH}=\{x\in R \sharp H\mid ({\rm id}\otimes \pi)\triangle(x)=x\otimes1\}.
\end{equation*}

Conversely, if $A$ is a Hopf algebra with bijective antipode and $\pi:A\rightarrow H$ is a
Hopf algebra epimorphism admitting a Hopf algebra section $\iota: H\rightarrow A$
such that $\pi \circ \iota={\rm id}_H$, then $R=A^{co \pi}$ is a braided Hopf algebra in $^H_H\mathcal{YD}$ and $A\cong R \sharp H$ as Hopf algebras. See \cite{R85} and \cite{R} for more details.

\section{Nichols algebras of semisimple modules in $^H_H\mathcal{YD}$}
In this section, we recall the structure of $H=H_{b:1}^*$ given in \cite{Z}
and determine the simple objects in $^H_H\mathcal{YD}$ whose Nichols algebras are finite dimensional
by using the facts and results in \cite{Z}.
Let $N=\bigoplus_{i\in I}N_i$, where $N_i$ is a simple module in $_H^H\mathcal{YD}$. Then we try to determine all the finite-dimensional Nichols algebras satisfying $\mathcal{B}(N)\cong \bigotimes_{i\in I}\mathcal{B}(N_i)$.

\begin{defi}{\rm\cite [Remark 3.2]{Z}}\label{def3.1}
As an algebra, $H$ is generated by $a,\ b,\ c,\ d$, satisfying the relations
\begin{eqnarray}
a^2=b^2=c^2=1,\ ab=ba, \ ac=ca,\ bc=cb,\label{3.1}\\
d^2=a,\ \ da=ad,\ \ db=cd,\ \ dc=bd,
\end{eqnarray}
and its coalgebra is given by
\begin{flalign}
&\triangle(a)=a\otimes a,\ \triangle(b)=b\otimes b,\ \triangle(c)=c\otimes c,\ \varepsilon(a)=\varepsilon(b)=1,\\
&\triangle(d)=\frac{1}{2}[(1+bc)d\otimes d+(1-bc)d\otimes ad],\ \ \varepsilon(c)=\varepsilon(d)=1,\label{3.2}
\end{flalign}
and its antipode is given by
\begin{equation}
S(a)=a,\ S(b)=b,\ S(c)=c,\ S(d)=\frac{1}{2}[a(1+bc)+(1-bc)]d.\nonumber
\end{equation}
\end{defi}

\begin{rem}
$G(H)=\langle a\rangle \times \langle b\rangle\times \langle c\rangle$.
\end{rem}

\begin{pro}
The automorphism group of $H$ is given by
\begin{flalign*}
\langle\tau_2,\tau_5,\tau_9,\tau_{17}\mid &\tau_2^2=\tau_5^2=\tau_9^2=1,\tau_{17}^4=1,\tau_2\tau_5=\tau_5\tau_2,
\tau_2\tau_9=\tau_9\tau_2,\\
&\tau_2\tau_{17}=\tau_{17}\tau_2,\tau_5\tau_9=\tau_9\tau_5,
\tau_5\tau_{17}=\tau_{17}\tau_5=\tau_9\tau_{17}\tau_9\rangle\\
&=\langle\tau_2\rangle\times\langle\tau_5,\tau_9,\tau_{17}\rangle\cong \mathbb{Z}_2\times M_2(2,1,1),
\end{flalign*}
where $M_2(2,1,1)$ is the minimal nonabelian 2-group,  which is isomorphic
to the semidirect product $(\mathbb{Z}_4\times \mathbb{Z}_2)\rtimes \mathbb{Z}_2$.
Then all automorphisms of $H$ are given in Table $1$.

\renewcommand\arraystretch{1.5}
\begin{longtable}[c]{|c|c|c|c|c|c|c|c|c|c|}
  \hline
  \hspace{.5em}& \hspace{.5em}$a$\hspace{.5em} & \hspace{.5em}$b$\hspace{.5em} & \hspace{.5em}$c$\hspace{.5em} & \hspace{.5em}$d$\hspace{.5em} & \hspace{.5em} &\hspace{.5em}$a$\hspace{.5em} & \hspace{.5em}$b$\hspace{.5em} & \hspace{.5em}$c$\hspace{.5em} & \hspace{.5em}$d$ \\  %
  \hline
  $\tau_1$ & $a$ & $b$ & $c$ & $d$  &$\tau_{17}$ & $abc$ & $b$ & $c$ & $\frac{1}{2}[(1+\xi)+(1-\xi)bc]d$ \\
  \hline
  $\tau_2$ & $a$ & $b$ & $c$ & $da$ &$\tau_{18}$ & $abc$ & $b$ & $c$ & $\frac{1}{2}[(1-\xi)+(1+\xi)bc]d$\\
  \hline
  $\tau_3$ & $a$ & $b$ & $c$ & $dbc$ &$\tau_{19}$ & $abc$ & $b$ & $c$ & $\frac{1}{2}[(1+\xi)+(1-\xi)bc]ad$\\
  \hline
  $\tau_4$ & $a$ & $b$ & $c$ & $dabc$ &$\tau_{20}$ & $abc$ & $b$ & $c$ & $\frac{1}{2}[(1-\xi)+(1+\xi)bc]ad$\\
  \hline
  $\tau_5$ & $a$ & $c$ & $b$ & $d$ &$\tau_{21}$ & $abc$ & $c$ & $b$ & $\frac{1}{2}[(1+\xi)+(1-\xi)bc]d$\\
  \hline
  $\tau_6$ & $a$ & $c$ & $b$ & $da$ &$\tau_{22}$ & $abc$ & $c$ & $b$ & $\frac{1}{2}[(1-\xi)+(1+\xi)bc]d$ \\
  \hline
  $\tau_7$ & $a$ & $c$ & $b$ & $dbc$ &$\tau_{23}$ & $abc$ & $c$ & $b$ & $\frac{1}{2}[(1+\xi)+(1-\xi)bc]ad$\\
  \hline
  $\tau_8$ & $a$ & $c$ & $b$ & $dabc$ &$\tau_{24}$ & $abc$ & $c$ & $b$ & $\frac{1}{2}[(1-\xi)+(1+\xi)bc]ad$ \\
  \hline
  $\tau_9$ & $a$ & $ab$ & $ac$ & $d$ & $\tau_{25}$ & $abc$ & $ab$ & $ac$ & $\frac{1}{2}[(1+\xi)+(1-\xi)bc]d$\\
  \hline
  $\tau_{10}$ & $a$ & $ab$ & $ac$ & $da$ & $\tau_{26}$ & $abc$ & $ab$ & $ac$ & $\frac{1}{2}[(1-\xi)+(1+\xi)bc]d$\\
  \hline
  $\tau_{11}$ & $a$ & $ab$ & $ac$ & $dbc$& $\tau_{27}$ & $abc$ & $ab$ & $ac$ & $\frac{1}{2}[(1+\xi)+(1-\xi)bc]ad$\\
  \hline
  $\tau_{12}$ & $a$ & $ab$ & $ac$ & $dabc$
  & $\tau_{28}$ & $abc$ & $ab$ & $ac$ & $\frac{1}{2}[(1-\xi)+(1+\xi)bc]ad$\\
  \hline
  $\tau_{13}$ & $a$ & $ac$ & $ab$ & $d$
  & $\tau_{29}$ & $abc$ & $ac$ & $ab$ & $\frac{1}{2}[(1+\xi)+(1-\xi)bc]d$\\
  \hline
  $\tau_{14}$ & $a$ & $ac$ &  $ab$& $da$  &$\tau_{30}$ & $abc$ & $ac$ & $ab$ & $\frac{1}{2}[(1-\xi)+(1+\xi)bc]d$\\
  \hline
  $\tau_{15}$ & $a$ & $ac$ & $ab$ & $dbc$& $\tau_{31}$ & $abc$ & $ac$ & $ab$ & $\frac{1}{2}[(1+\xi)+(1-\xi)bc]ad$\\
  \hline
  $\tau_{16}$ & $a$ & $ac$ & $ab$ & $dabc$& $\tau_{32}$ & $abc$ & $ac$ & $ab$ & $\frac{1}{2}[(1-\xi)+(1+\xi)bc]ad$\\
  \hline
\end{longtable}
\begin{center}
Table $1.$ Automorphisms of $H$
\end{center}
\end{pro}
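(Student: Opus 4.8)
The plan is to determine $\operatorname{Aut}(H)$ directly from the presentation in Definition \ref{def3.1}, reducing everything to the action on $G(H)$ and on the generator $d$. First I would record two structural facts. Since $H=H_{b:1}^{*}$ is finite-dimensional semisimple, it is cosemisimple, so it equals its own coradical; hence any Hopf algebra automorphism $\phi$ permutes the simple subcoalgebras of each dimension. The $1$-dimensional ones are the $\mathbbm{k}g$ with $g\in G(H)=\langle a\rangle\times\langle b\rangle\times\langle c\rangle\cong\mathbb{Z}_2^3$, so $\phi$ restricts to a group automorphism of $G(H)$ and preserves $\mathbbm{k}G(H)$; the remaining simple subcoalgebras span the complementary subcoalgebra $W:=(\mathbbm{k}G(H))d=\operatorname{span}\{gd:g\in G(H)\}$, which is therefore $\phi$-invariant. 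Consequently $\phi$ is determined by the pair $(\phi|_{G(H)},\,x)$, where $\phi(d)=xd$ with $x$ a unit of the commutative algebra $\mathbbm{k}G(H)$.

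Next I would exploit the defining relations to pin down this pair. Writing $\sigma$ for the automorphism of $G(H)$ with $\sigma(a)=a$, $\sigma(b)=c$, $\sigma(c)=b$ (conjugation by $d$; note that conjugation by any element of $W$ induces $\sigma$ on $G(H)$, by commutativity of $\mathbbm{k}G(H)$), applying $\phi$ to $dgd^{-1}=\sigma(g)$ gives $\phi\sigma=\sigma\phi$ on $G(H)$, so $\phi(c)=\sigma(\phi(b))$ and $\phi(a)\in G(H)^{\sigma}=\langle a,bc\rangle$. Applying $\phi$ to $d^{2}=a$ gives $x\,\sigma(x)\,a=\phi(a)$. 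Analysing which units $x\in\mathbbm{k}G(H)$ satisfy $x\sigma(x)\in G(H)$ and using $\phi(a)\neq 1$, I would conclude $\phi(a)\in\{a,abc\}$; together with the freedom $\phi(b)\in\{b,c,ab,ac\}$ this gives exactly $8$ admissible group automorphisms $\phi|_{G(H)}$, and for each a short list of candidates for $x$. The remaining constraint is the coalgebra condition $\triangle(xd)=(\phi\otimes\phi)\triangle(d)$. Since $\phi(bc)=\phi(b)\sigma(\phi(b))=bc$, the map $\phi$ fixes the idempotents $e_{\pm}=\tfrac12(1\pm bc)$, and after substituting $\triangle(d)=e_{+}d\otimes d+e_{-}d\otimes ad$ this becomes an explicit identity in $x$ and $\phi(a)$ leaving precisely four values of $x$ per admissible $\phi|_{G(H)}$ — the untwisted ones $x\in\{1,a,bc,abc\}$ when $\phi(a)=a$, and the $\xi$-twisted ones built from $e=\tfrac12[(1+\xi)+(1-\xi)bc]$ (using $e^{2}=bc$) when $\phi(a)=abc$. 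This yields $8\times4=32$ candidates, which one checks are genuine Hopf automorphisms and coincide with $\tau_{1},\dots,\tau_{32}$ of Table $1$.

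Finally I would identify the group structure by computing products of $\tau_{2},\tau_{5},\tau_{9},\tau_{17}$: checking $\tau_{2}^{2}=\tau_{5}^{2}=\tau_{9}^{2}=\operatorname{id}$, $\tau_{17}^{2}=\tau_{3}$ and hence $\tau_{17}^{4}=\operatorname{id}$, together with the commutation relations listed, so that these four automorphisms generate all $32$ and $\operatorname{Aut}(H)\cong\mathbb{Z}_2\times M_2(2,1,1)$, where $M_2(2,1,1)$ is the minimal nonabelian $2$-group of order $16$, isomorphic to $(\mathbb{Z}_4\times\mathbb{Z}_2)\rtimes\mathbb{Z}_2$. (As a cross-check, dualization gives $\operatorname{Aut}(H)\cong\operatorname{Aut}(H_{b:1})$, so the abstract isomorphism type also follows from \cite{Z}.)

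The step I expect to be the main obstacle is the coalgebra compatibility: because $\triangle(d)$ is quadratic in $d$ and carries the non-grouplike coefficients $e_{\pm}$, the expansion of $\triangle(xd)=(\phi\otimes\phi)\triangle(d)$ must be handled carefully. It is precisely this computation that eliminates the spurious candidates for $x$ (for instance $x=b$ when $\phi(a)=abc$ satisfies $x\sigma(x)=bc$ but fails the coproduct identity) and, conversely, forces the appearance of the new automorphisms with coefficients in $\mathbbm{k}[\xi]$; everything else is bookkeeping over the $8$ possible restrictions to $G(H)$.
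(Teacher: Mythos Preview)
Your approach is sound but takes a quite different route from the paper's. The paper's proof is essentially two lines: since $H=H_{b:1}^{*}$, dualization $f\mapsto f^{*}$ identifies $\operatorname{Aut}(H)$ with $\operatorname{Aut}(H_{b:1})$, and the latter was already determined in \cite[Proposition~3.3]{Z}; Table~$1$ is then read off by transporting those automorphisms through the explicit duality. What you relegate to a final ``cross-check'' is, for the authors, the entire argument. Your direct computation from the presentation of $H$ is more laborious but has the merit of being self-contained and of explaining transparently where the $\xi$-twisted images of $d$ come from. One small correction to your reasoning: the exclusion $\phi(a)\neq bc$ does \emph{not} follow from the unit analysis of $x\sigma(x)\in G(H)$ as you suggest (over an algebraically closed field there do exist units $x\in\mathbbm{k}G(H)$ with $x\sigma(x)=abc$); rather, it follows immediately from your own later observation that $\phi(bc)=\phi(b)\sigma(\phi(b))=bc$ for every admissible $\phi|_{G(H)}$, since injectivity of $\phi$ then forbids $\phi(a)=bc$. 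With that adjustment the outline goes through.
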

\begin{proof}
Let $f$ be an automorphism of $H$. Then $f^*$ is an automorphism of $H^*$.
By Proposition\ 3.3 in \cite{Z}, the claim follows.
\end{proof}


Now we describe the simple objects in $^H_H\mathcal{YD}$ whose Nichols algebras are finite dimensional.
\begin{pro}\label{simple}
If $V$ is a simple object in $^H_H\mathcal{YD}$, then $\mathcal{B}(V)$ is finite-dimensional if and only if $V$ is isomorphic either to
one of the following Yetter-Drinfeld modules

$(1) \ \mathbbm{k}_{\chi_{i,j,k,l}}=\mathbbm{k}\{v\}$, $(i,j,l)\in\{(1,1,0),(1,3,0),(0,1,1),(0,3,1)\}$, $0\leq k<2$ with the structure given by
\begin{equation*}
a\cdot v=(-1)^jv,\ b\cdot v=(-1)^lv,\ c\cdot v=(-1)^lv,\ d\cdot v=\xi^{-j} v,\ \delta(v)=a^ib^{j+k}c^k\otimes v.
\end{equation*}

$(2)$ $M_1=\mathbbm{k}\{v_1,v_2\}$ with the structure given by
$\delta(v_1)=b\otimes v_1$, $\delta(v_2)=c\otimes v_2$ and
\begin{equation*}
[a]=
\left(
  \begin{array}{cc}
    1 & 0 \\
    0 & 1 \\
  \end{array}
\right),
\ [b]=
\left(
  \begin{array}{cc}
    -1& 0 \\
    0 & -1 \\
  \end{array}
\right),
\ [c]=
\left(
  \begin{array}{cc}
    -1& 0 \\
    0 & -1 \\
  \end{array}
\right),
\ [d]=
\left(
  \begin{array}{cc}
    0& 1 \\
    1 & 0 \\
  \end{array}
\right).
\end{equation*}

$(3)$ $M_2=\mathbbm{k}\{v_1,v_2\}$ with the coaction given by
$\delta(v_1)=ab\otimes v_1$, $\delta(v_2)=ac\otimes v_2$ and the
same action as $M_1$.

$(4)$ $M_3=\mathbbm{k}\{v_1,v_2\}$ with the structure given by
$\delta(v_1)=bc\otimes v_1$, $\delta(v_2)=abc\otimes v_2$ and
\begin{equation*}
[a]=
\left(
  \begin{array}{cc}
    1 & 0 \\
    0 & 1 \\
  \end{array}
\right),
\ [b]=
\left(
  \begin{array}{cc}
    1& 0 \\
    0 & -1 \\
  \end{array}
\right),
\ [c]=
\left(
  \begin{array}{cc}
    -1& 0 \\
    0 & 1 \\
  \end{array}
\right),
\ [d]=
\left(
  \begin{array}{cc}
    0& 1 \\
    1 & 0 \\
  \end{array}
\right).
\end{equation*}

$(5)$ $M_4=\mathbbm{k}\{v_1,v_2\}$ with its coaction given by
$\delta(v_1)=c\otimes v_1$, $\delta(v_2)=ab\otimes v_2$ and the same
action as $M_3$.

$(6)$ $M_5=\mathbbm{k}\{v_1,v_2\}$ with the structure given by
$\delta(v_1)=bc\otimes v_1$, $\delta(v_2)=abc\otimes v_2$ and
\begin{equation*}
[a]=
\left(
  \begin{array}{cc}
    1 & 0 \\
    0 & 1 \\
  \end{array}
\right),
\ [b]=
\left(
  \begin{array}{cc}
    -1& 0 \\
    0 & 1 \\
  \end{array}
\right),
\ [c]=
\left(
  \begin{array}{cc}
    1& 0 \\
    0 & -1 \\
  \end{array}
\right),
\ [d]=
\left(
  \begin{array}{cc}
    0& 1 \\
    1 & 0 \\
  \end{array}
\right).
\end{equation*}

$(7)$ $M_6=\mathbbm{k}\{v_1,v_2\}$ with its coaction given by
$\delta(v_1)=b\otimes v_1$, $\delta(v_2)=ac\otimes v_2$ and the same
action as $M_5$.

$(8)$ $M_7=\mathbbm{k}\{v_1,v_2\}$ with the structure given by
$\delta(v_1)=a\otimes v_1$, $\delta(v_2)=abc\otimes v_2$ and
\begin{equation*}
[a]=
\left(
  \begin{array}{cc}
    -1 & 0 \\
    0 & -1 \\
  \end{array}
\right),
\ [b]=
\left(
  \begin{array}{cc}
    1& 0 \\
    0 & 1 \\
  \end{array}
\right),
\ [c]=
\left(
  \begin{array}{cc}
    1& 0 \\
    0 & 1 \\
  \end{array}
\right),
\ [d]=
\left(
  \begin{array}{cc}
    0& -1 \\
    1 & 0 \\
  \end{array}
\right).
\end{equation*}

$(9)$ $M_8=\mathbbm{k}\{v_1,v_2\}$ with the structure given by
$\delta(v_1)=a\otimes v_1$, $\delta(v_2)=abc\otimes v_2$ and
\begin{equation*}
\hspace{1em}[a]=
\left(
  \begin{array}{cc}
    -1 & 0 \\
    0 & -1 \\
  \end{array}
\right),
\ [b]=
\left(
  \begin{array}{cc}
    -1& 0 \\
    0 & -1 \\
  \end{array}
\right),
\ [c]=
\left(
  \begin{array}{cc}
    -1& 0 \\
    0 & -1 \\
  \end{array}
\right),
\ [d]=
\left(
  \begin{array}{cc}
    0& -1 \\
    1 & 0 \\
  \end{array}
\right).
\end{equation*}

$(10)$ $M_9=\mathbbm{k}\{v_1,v_2\}$ with the structure given by
$\delta(v_1)=c\otimes v_1$, $\delta(v_2)=ac\otimes v_2$ and
\begin{equation*}
\ \ [a]=
\left(
  \begin{array}{cc}
    -1 & 0 \\
    0 & -1 \\
  \end{array}
\right),
\ [b]=
\left(
  \begin{array}{cc}
    1& 0 \\
    0 & -1 \\
  \end{array}
\right),
\ [c]=
\left(
  \begin{array}{cc}
    -1& 0 \\
    0 & 1 \\
  \end{array}
\right),
\ [d]=
\left(
  \begin{array}{cc}
    0& 1 \\
    -1 & 0 \\
  \end{array}
\right).
\end{equation*}

$(11)$ $M_{10}=\mathbbm{k}\{v_1,v_2\}$ with its coaction given by
$\delta(v_1)=bc\otimes v_1$, $\delta(v_2)=a\otimes v_2$ and the same
action as $M_9$.

$(12)$ $M_{11}=\mathbbm{k}\{v_1,v_2\}$ with the structure given by
$\delta(v_1)=b\otimes v_1$, $\delta(v_2)=ab\otimes v_2$ and
\begin{equation*}
[a]=
\left(
  \begin{array}{cc}
    -1 & 0 \\
    0 & -1 \\
  \end{array}
\right),
\ [b]=
\left(
  \begin{array}{cc}
    -1& 0 \\
    0 & 1 \\
  \end{array}
\right),
\ [c]=
\left(
  \begin{array}{cc}
    1& 0 \\
    0 & -1 \\
  \end{array}
\right),
\ [d]=
\left(
  \begin{array}{cc}
    0& 1 \\
    -1 & 0 \\
  \end{array}
\right).
\end{equation*}

$(13)$ $M_{12}=\mathbbm{k}\{v_1,v_2\}$ with its coaction given by
$\delta(v_1)=bc\otimes v_1$, $\delta(v_2)=a\otimes v_2$ and the same
action as $M_{11}$.

$(14)$ $M_{13}=\mathbbm{k}\{v_1,v_2\}$ with its action given by
\begin{equation*}
[a]=
\left(
  \begin{array}{cc}
    1 & 0 \\
    0 & 1 \\
  \end{array}
\right),
\ [b]=
\left(
  \begin{array}{cc}
    -1& 0 \\
    0 & 1 \\
  \end{array}
\right),
\ [c]=
\left(
  \begin{array}{cc}
    -1& 0 \\
    0 & 1 \\
  \end{array}
\right),
\ [d]=
\left(
  \begin{array}{cc}
    1& 0 \\
    0 & -1 \\
  \end{array}
\right),
\end{equation*}

and its coaction given by
\begin{flalign*}
&\delta(v_1)=\frac{1}{2}[a(b+c)d\otimes v_1+a(b-c)d\otimes v_2],\\
&\delta(v_2)=\frac{1}{2}[(b+c)d\otimes v_2+(b-c)d\otimes v_1].
\end{flalign*}

$(15)$ $M_{14}=\mathbbm{k}\{v_1,v_2\}$ with the same action as
$M_{13}$ and its coaction given by
\begin{flalign*}
&\delta(v_1)=\frac{1}{2}[(b+c)d\otimes v_1+(b-c)d\otimes v_2],\\
&\delta(v_2)=\frac{1}{2}[a(b+c)d\otimes v_2+a(b-c)d\otimes v_1].
\end{flalign*}

$(16)$ $M_{15}=\mathbbm{k}\{v_1,v_2\}$ with its action given by
\begin{equation*}
[a]=
\left(
  \begin{array}{cc}
    1 & 0 \\
    0 & 1 \\
  \end{array}
\right),
\ [b]=
\left(
  \begin{array}{cc}
    1& 0 \\
    0 & -1 \\
  \end{array}
\right),
\ [c]=
\left(
  \begin{array}{cc}
    1& 0 \\
    0 & -1 \\
  \end{array}
\right),
\ [d]=
\left(
  \begin{array}{cc}
    -1& 0 \\
    0 & -1 \\
  \end{array}
\right),
\end{equation*}

and its coaction given by
\begin{flalign*}
&\delta(v_1)=\frac{1}{2}[a(1+bc)d\otimes v_1+a(1-bc)d\otimes v_2],\\ &\delta(v_2)=\frac{1}{2}[(1+bc)d\otimes v_2+(1-bc)d\otimes v_1].
\end{flalign*}

$(17)$ $M_{16}=\mathbbm{k}\{v_1,v_2\}$ with the same action as
$M_{15}$ and its coaction given by
\begin{flalign*}
&\delta(v_1)=\frac{1}{2}[(1+bc)d\otimes v_1+(1-bc)d\otimes v_2],\\
&\delta(v_2)=\frac{1}{2}[a(1+bc)d\otimes v_2+a(1-bc)d\otimes v_1].
\end{flalign*}

$(18)$ $M_{17}=\mathbbm{k}\{v_1,v_2\}$ with the same coaction as $M_{15}$ and its action given by
\begin{equation*}
[a]=
\left(
  \begin{array}{cc}
    1 & 0 \\
    0 & 1 \\
  \end{array}
\right),
\ [b]=
\left(
  \begin{array}{cc}
    1& 0 \\
    0 & -1 \\
  \end{array}
\right),
\ [c]=
\left(
  \begin{array}{cc}
    -1& 0 \\
    0 & 1 \\
  \end{array}
\right),
\ [d]=
\left(
  \begin{array}{cc}
    0& -1 \\
    -1 & 0 \\
  \end{array}
\right).
\end{equation*}

$(19)$ $M_{18}=\mathbbm{k}\{v_1,v_2\}$ with the same action as
$M_{17}$ and the same coaction as $M_{16}$.

$(20)$ $M_{19}=\mathbbm{k}\{v_1,v_2\}$ with the same coaction as $M_{15}$ and its action given by
\begin{equation*}
[a]=
\left(
  \begin{array}{cc}
    -1 & 0 \\
    0 & -1 \\
  \end{array}
\right),
\ [b]=
\left(
  \begin{array}{cc}
    1& 0 \\
    0 & -1 \\
  \end{array}
\right),
\ [c]=
\left(
  \begin{array}{cc}
    -1& 0 \\
    0 & 1 \\
  \end{array}
\right),
\ [d]=
\left(
  \begin{array}{cc}
    0& 1 \\
    -1 & 0 \\
  \end{array}
\right).
\end{equation*}

$(21)$ $M_{20}=\mathbbm{k}\{v_1,v_2\}$ with the same coaction as $M_{15}$ and its action given by
\begin{equation*}
[a]=
\left(
  \begin{array}{cc}
    -1 & 0 \\
    0 & -1 \\
  \end{array}
\right),
\ [b]=
\left(
  \begin{array}{cc}
    -1& 0 \\
    0 & 1 \\
  \end{array}
\right),
\ [c]=
\left(
  \begin{array}{cc}
    1& 0 \\
    0 & -1 \\
  \end{array}
\right),
\ [d]=
\left(
  \begin{array}{cc}
    0& 1 \\
    -1 & 0 \\
  \end{array}
\right).
\end{equation*}
\end{pro}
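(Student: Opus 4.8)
The plan is to classify the simple objects in $^H_H\mathcal{YD}$ with finite-dimensional Nichols algebra by a structure-theoretic reduction, exploiting the exhaustive analysis already carried out in \cite{Z}. The starting observation is that, since $H$ is finite-dimensional, $^{H}_{H}\mathcal{YD} \simeq {}^{H^\ast}_{H^\ast}\mathcal{YD} = {}^{H_{b:1}}_{H_{b:1}}\mathcal{YD}$ as braided monoidal categories, via the explicit functor recorded in \eqref{dual module}. Because the Nichols algebra $\mathcal{B}(V)$ depends, as an algebra and coalgebra, only on the underlying braided vector space $(V,c_{V,V})$ (Remark \ref{rem}), and a braided equivalence carries a simple object to a simple object of the same braided type, it follows that $\mathcal{B}(V)$ is finite-dimensional for a simple $V\in {}^H_H\mathcal{YD}$ if and only if the corresponding simple object in $^{H_{b:1}}_{H_{b:1}}\mathcal{YD}$ has finite-dimensional Nichols algebra. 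The latter list is exactly the one produced in \cite{Z}, so the task is to transport each item across the equivalence and write it down in the coordinates of $H$ given in Definition \ref{def3.1}.

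First I would set up, once and for all, the pairing between $H$ and $H^\ast = H_{b:1}$ and identify the dual basis $\{h_i\}$, $\{h^i\}$; in particular I would record how $S^{-1}$ and $S$ act on the relevant group-likes and on $d$, so that the formulas $f\cdot v = f(S(v_{(-1)}))v_{(0)}$ and $\delta(v)=\sum_i S^{-1}(h^i)\otimes h_i\cdot v$ become concrete matrices/coactions. For the one-dimensional simples, the computation is purely about characters of $G(H^\ast)$ and the eigenvalue of the "$d$''-type generator, and it yields the four families $\mathbbm{k}_{\chi_{i,j,k,l}}$ of item (1), with the stated constraints on $(i,j,l)$ (these constraints come precisely from the self-braiding condition $c(v\otimes v)=q\,v\otimes v$ with $q\neq 1$, together with $q$ being a root of unity of the appropriate order — for a group-type braiding this forces $q=-1$, hence the parity conditions). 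For the two-dimensional simples $M_1,\dots,M_{20}$, I would split into two cases according to whether the $H^\ast$-comodule structure is supported on group-likes (cases $M_1$ through $M_{12}$, which correspond to Yetter-Drinfeld modules over the group $G(H^\ast)$ twisted by the action of $d$) or is "genuinely non-grouplike'' (cases $M_{13}$ through $M_{20}$, whose coaction involves the coproduct-split element $\tfrac12(1\pm bc)d$); in each case one transports the module in \cite{Z} through \eqref{dual module} and reads off the matrices $[a],[b],[c],[d]$ and the coaction $\delta$.

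The verification that each candidate is indeed a Yetter-Drinfeld module reduces to checking the compatibility $\delta(h\cdot v) = h_{(1)} v_{(-1)} S(h_{(3)}) \otimes h_{(2)}\cdot v_{(0)}$ on the generators $a,b,c,d$ of $H$; this is routine but must be done carefully for $d$, since $\triangle(d)$ is not grouplike and the conjugation $d\,(-)\,S(d)$ mixes the two basis vectors. Simplicity of each $M_i$ follows either from irreducibility of the underlying $G(H^\ast)$-representation or, in the non-grouplike cases, from the fact that the coaction has no one-dimensional subcomodule; finiteness of $\mathcal{B}(M_i)$ and the non-existence of further simples both follow by transport from \cite{Z}. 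The main obstacle I anticipate is bookkeeping: getting every sign and every power of $\xi$ right under $S^{-1}$ and under the non-cocommutative coproduct of $d$, and making sure the normalization of the idempotents $\tfrac12(1\pm bc)$ is preserved, so that items $(14)$–$(21)$ come out exactly with the coactions displayed. A secondary subtlety is to argue the "only if'' direction cleanly: one must note that a simple $V$ with $\dim\mathcal{B}(V)<\infty$ cannot contain an element with trivial self-braiding (Remark \ref{rem}), which is what rules out all the remaining characters and two-dimensional modules not on the list; since the braided equivalence is an equivalence of \emph{braided} categories, this dichotomy is preserved and the classification in \cite{Z} transfers verbatim.
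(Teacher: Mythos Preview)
Your proposal is correct and follows essentially the same approach as the paper: the paper's proof consists of a single sentence invoking the braided equivalence $^{H^\ast}_{H^\ast}\mathcal{YD}\simeq {}^{H}_{H}\mathcal{YD}$ from \cite{AG99} via equation \eqref{dual module}, and then citing Propositions 4.1, 4.3, 4.4, 4.5 and Lemmas 5.3, 5.4, 5.5 of \cite{Z} to transport the classification over $H_{b:1}$ to $H$. Your write-up simply spells out in more detail the mechanics of that transport (dual pairing, action of $S^{-1}$, verification of the Yetter--Drinfeld compatibility for $d$, the grouplike versus non-grouplike dichotomy for the coaction), which is precisely what the paper leaves implicit.
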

\begin{proof}
Since $^{H^\ast}_{H^\ast}\mathcal{YD}$ is braided-equivalent to $^H_H\mathcal{YD}$ \cite{AG99}, by equation $(\ref{dual module})$ and Propositions $4.1$, $4.3$, $4.4$, $4.5$, Lemmas $5.3$, $5.4$, $5.5$ in \cite{Z},
we can get the claim.
\end{proof}

For simplifying our statements, we let
\begin{flalign*}
&V_1=\mathbbm{k}_{\chi_{1,1,0,0}},\hspace{1em} V_2=\mathbbm{k}_{\chi_{1,3,0,0}},
\hspace{1em}V_3=\mathbbm{k}_{\chi_{1,1,1,0}}, \hspace{1em}V_4=\mathbbm{k}_{\chi_{1,3,1,0}},\\
&V_5=\mathbbm{k}_{\chi_{0,1,0,1}}, \hspace{1em}V_6=\mathbbm{k}_{\chi_{0,3,0,1}},
\hspace{1em}V_7=\mathbbm{k}_{\chi_{0,1,1,1}}, \hspace{1em}V_8=\mathbbm{k}_{\chi_{0,3,1,1}}.
\end{flalign*}

%
%
%
%
Since $^{H^\ast}_{H^\ast}\mathcal{YD}$ is braided-equivalent to $^H_H\mathcal{YD}$ \cite{AG99},
we can get the following proposition.
\begin{pro}{\rm\cite [Proposition 5.6]{Z}}\label{sum}
$(1)$ Let $V,\ W$ be simple objects in $_H^H\mathcal{YD}$. As finite-dimensional Nichols algebras, we have $\mathcal{B}(V\oplus W)\cong \mathcal{B}(V)\otimes\mathcal{B}(W)$ in the following cases

\hspace{1.5em}$(a)$ $V, ~W\in \{V_1,V_2,\ldots, V_8\};$

\hspace{1.5em}$(b)$ $V\in \{V_5,V_6,V_7,V_8\}$, $W\in\{M_1, M_{8}\};$

\hspace{1.5em}$(c)$ $V\in \{V_1,V_2,V_3,V_4\}$, $W\in\{M_2, M_{7}\};$

\hspace{1.5em}$(d)$ $V\in \{V_1,V_2,V_5,V_6\}$, $W\in\{M_3, M_{11}\};$

\hspace{1.5em}$(e)$ $V\in \{V_1,V_2,V_7,V_8\}$, $W\in\{M_4, M_{12}\};$

\hspace{1.5em}$(f)$ $V\in \{V_3,V_4,V_7,V_8\}$, $W\in\{M_5, M_{9}\};$

\hspace{1.5em}$(g)$ $V\in \{V_3,V_4,V_5,V_6\}$, $W\in\{M_6, M_{10}\};$

\hspace{1.5em}$(h)$ $V=M_1$, $W\in\{M_2,M_7\}$; $V=M_3$ or $M_{11}$, $W\in\{M_5,M_9\};$

\hspace{1.5em}$(i)$ $V=M_4$ or $M_{12}$, $W\in\{M_6,M_{10}\};$

\hspace{1.5em}$(j)$ $(V,W)\in\{(M_2,M_8),(M_7,M_{8}),(M_{13},M_{14}),(M_{15},M_{16}),(M_{17},M_{18})$,

\hspace{27em} $(M_{19},M_{20})\};$

\hspace{1.5em}$(k)$ $V=W=M_i$ for $i\in\mathbb{I}_{1,20}$.

$(2)$ For $M\in \{M_{13},\ldots, M_{20}\},~ V\in\{V_1,\ldots,V_8\}$, $\dim \mathcal{B}(V\oplus M)=\infty$.

\end{pro}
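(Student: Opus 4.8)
The plan is to transport everything from the dual side. Since $H=H_{b:1}^*$, we have $H^*=H_{b:1}$, and by \cite{AG99} the categories ${}^{H}_{H}\mathcal{YD}$ and ${}^{H^*}_{H^*}\mathcal{YD}$ are braided equivalent. Under this equivalence the underlying vector space of a Yetter-Drinfeld module is unchanged and the braided structure is preserved, so Nichols algebras correspond: for $V\in{}^{H}_{H}\mathcal{YD}$ with image $\widetilde V\in{}^{H^*}_{H^*}\mathcal{YD}$ one has $\mathcal{B}(V)\cong\mathcal{B}(\widetilde V)$ as braided graded algebras, and in particular $\dim\mathcal{B}(V)=\dim\mathcal{B}(\widetilde V)$; moreover $\mathcal{B}(V\oplus W)\cong\mathcal{B}(V)\otimes\mathcal{B}(W)$ holds iff the same holds for $\widetilde V\oplus\widetilde W$, because this tensor-product decomposition is equivalent to the vanishing of the relevant braided (anti)commutators, a condition formulated purely in terms of the braiding $c$. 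Therefore the statement of Proposition \ref{sum} is literally the image under this equivalence of \cite[Proposition 5.6]{Z}.

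First I would make precise the dictionary: using the explicit formula \eqref{dual module} for the action and coaction of $\widetilde V$ in ${}^{H^*}_{H^*}\mathcal{YD}$ in terms of those of $V$ in ${}^{H}_{H}\mathcal{YD}$, together with the identifications already recorded in Proposition \ref{simple} (whose proof invokes exactly Propositions $4.1$, $4.3$, $4.4$, $4.5$ and Lemmas $5.3$, $5.4$, $5.5$ of \cite{Z}), I would check that the simple modules $V_1,\dots,V_8, M_1,\dots,M_{20}$ listed here correspond, one-to-one and matching the chosen labels, to the simple modules bearing the same names in \cite{Z}. This is the step that requires genuine (though routine) bookkeeping: one must confirm that the pairing of labels is consistent, so that cases $(a)$--$(k)$ of part $(1)$ and the list in part $(2)$ go over to the corresponding cases of \cite[Prop.\ 5.6]{Z} without a permutation of indices. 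Since \cite{Z} presumably fixes its labels via the same conventions for $H_{b:1}$, I expect the correspondence to be the identity on names, but this must be verified rather than assumed.

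Once the dictionary is in place, part $(1)$ follows immediately: for each pair $(V,W)$ in $(a)$--$(k)$ the corresponding pair in ${}^{H^*}_{H^*}\mathcal{YD}$ appears in \cite[Prop.\ 5.6(1)]{Z}, hence $\mathcal{B}(\widetilde V\oplus\widetilde W)\cong\mathcal{B}(\widetilde V)\otimes\mathcal{B}(\widetilde W)$ is finite-dimensional, and applying the braided equivalence backwards gives $\mathcal{B}(V\oplus W)\cong\mathcal{B}(V)\otimes\mathcal{B}(W)$, again finite-dimensional. Similarly part $(2)$ follows from \cite[Prop.\ 5.6(2)]{Z} together with $\dim\mathcal{B}(V\oplus M)=\dim\mathcal{B}(\widetilde V\oplus\widetilde M)=\infty$. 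For completeness I would also note that finiteness of each individual $\mathcal{B}(V_i)$ and $\mathcal{B}(M_j)$ is part of Proposition \ref{simple}, so the tensor-product factorizations in $(1)$ indeed yield finite-dimensional algebras.

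The main obstacle is not any hard mathematics but the faithful transport of notation: one must be certain that the names $V_i$ and $M_j$ used in this paper really are the images under \eqref{dual module} of the identically named objects in \cite{Z}, since otherwise the case list $(a)$--$(k)$ might be stated with the wrong incidence pattern. Given that Proposition \ref{simple} was already proved by exactly this transport, and that the braiding is an invariant of the equivalence, I expect this to be a short argument modulo that verification; I would state it as "by the braided equivalence ${}^{H}_{H}\mathcal{YD}\simeq{}^{H^*}_{H^*}\mathcal{YD}$ and \cite[Proposition 5.6]{Z}" with a sentence recording the label-matching.
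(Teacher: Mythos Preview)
Your proposal is correct and takes essentially the same approach as the paper: the paper gives no separate proof but simply prefaces the proposition with ``Since $^{H^\ast}_{H^\ast}\mathcal{YD}$ is braided-equivalent to $^H_H\mathcal{YD}$ \cite{AG99}, we can get the following proposition'' and cites \cite[Proposition~5.6]{Z}. Your write-up is a more careful expansion of exactly this reduction, and your remark that the label-matching is already implicit in the proof of Proposition~\ref{simple} is precisely why the paper feels no need to say more.
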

%
$\mathbf{Proof\ of\ Theorem\ A}$. The claim follows by Propositions \ref{simple}, ~\ref{sum}.

\section{Hopf algebras over $H$}
In this section, based on the lifting method, we will determine all
finite-dimensio\-nal Hopf algebras over $H$ such that their
infinitesimal braidings are those Yetter-Drinfeld modules listed in
Theorem A.  Since $^{H^\ast}_{H^\ast}\mathcal{YD}$ is braided-equivalent to $^H_H\mathcal{YD}$, by {\rm\cite [Theorem 6.1]{Z}}, we
have that the diagrams of these Hopf algebras are Nichols algebras,
and by {\rm\cite [Lemma 6.1]{AS0}}, we get the following Corollary.

\begin{cor}{\rm\cite [Corollary 6.3]{Z}}
$(1)$ For
\begin{equation*}
(i,j)\in\{(2,3), (4,6), (5,7), (8,9), (10,12), (11,13), (4,11), (5,10)\},
\end{equation*}

$\mathcal{B}(\Omega_i(n_1,n_2,n_3,n_4))\sharp H\simeq \mathcal{B}(\Omega_j(n_1,n_2,n_3,n_4))\sharp H$.

$(2)$ $\mathcal{B}(\Omega_i)\sharp H\simeq \mathcal{B}(\Omega_j)\sharp H$, for
\begin{flalign*}
(i,j)\in\{&(14,17), (16,18), (19,25), (21,26), (22,27), (24,28), (29,31),\\ &(32,36),(34,37),(38,40), (41,43), (44,46), (47,49), (21,24),\\
&(22,32), (23,33), (19,34), (20,35), (44,47), (45,48)\}.
\end{flalign*}
\end{cor}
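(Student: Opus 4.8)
# Proof Proposal for the Corollary

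The plan is to deduce both parts from the single structural fact already granted in the excerpt, namely that for the Nichols algebras involved, $\mathcal{B}(V\oplus W)\cong\mathcal{B}(V)\otimes\mathcal{B}(W)$ in the braided category $^H_H\mathcal{YD}$ (Proposition \ref{sum}), together with the standard transport-of-structure lemma \cite[Lemma 6.1]{AS0}: if $\phi$ is a Hopf algebra automorphism of $H$ and $V$ is a Yetter-Drinfeld module, then $\mathcal{B}(V)\sharp H\cong\mathcal{B}(V^{\phi})\sharp H$, where $V^{\phi}$ denotes $V$ with module and comodule structures twisted by $\phi$. So the whole Corollary reduces to exhibiting, for each listed pair $(i,j)$, an automorphism $\tau\in\operatorname{Aut}(H)$ (drawn from Table $1$) carrying the Yetter-Drinfeld module $\Omega_i$ (resp. its summands) onto $\Omega_j$.

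First I would observe that each $\Omega_i$ (and each $\Omega_j$) is a direct sum of the simple modules $V_1,\dots,V_8$ and $M_1,\dots,M_{20}$, and that an automorphism $\tau$ of $H$ acts on the finite set of isomorphism classes of simple objects in $^H_H\mathcal{YD}$ by permutation; since $\operatorname{Aut}(H)\cong\mathbb{Z}_2\times M_2(2,1,1)$ by the preceding Proposition, I would tabulate how the four generators $\tau_2,\tau_5,\tau_9,\tau_{17}$ act on the list $\{V_1,\dots,V_8,M_1,\dots,M_{20}\}$. This is a routine finite computation: for the one-dimensional $V_k=\mathbbm{k}_{\chi_{i,j,k,l}}$ one tracks the grouplike $a^ib^{j+k}c^k$ and the scalars under $a,b,c,d$; for the $M_r$ one tracks the comodule grouplikes and the $2\times 2$ action matrices, all recorded explicitly in Proposition \ref{simple}. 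Having these permutations in hand, for each pair $(i,j)$ in part $(1)$ one checks that some $\tau$ fixes each $V_s^{\oplus n_s}$ appearing and sends the ``$M$-part'' of $\Omega_i(n_1,\dots,n_4)$ to that of $\Omega_j(n_1,\dots,n_4)$ — e.g. for $(i,j)=(2,3)$ one wants $\tau$ with $\{V_5,V_6,V_7,V_8\}$ fixed and $M_1\mapsto M_2$, which $\tau_9$ supplies since $\tau_9$ conjugates $b\mapsto ab$, $c\mapsto ac$ and hence moves the comodule grouplikes $b,c$ of $M_1$ to $ab,ac$, those of $M_2$. For part $(2)$, where all multiplicities are absorbed into pairs $M_p\oplus M_q$, the same bookkeeping applies directly to the two summands.

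Then, with the right $\tau$ chosen, the isomorphism $\mathcal{B}(\Omega_i)\sharp H\cong\mathcal{B}(\Omega_j)\sharp H$ follows: $\mathcal{B}(\Omega_i)\cong\bigotimes\mathcal{B}(N_s)$ by Proposition \ref{sum}, the twist $\tau$ carries this to $\bigotimes\mathcal{B}(N_s^{\tau})\cong\bigotimes\mathcal{B}(N'_s)\cong\mathcal{B}(\Omega_j)$ using that Nichols algebras are determined by the braided vector space and commute with braided tensor decomposition, and then $\mathcal{B}(\Omega_i)\sharp H\cong\mathcal{B}(\Omega_i^{\tau})\sharp H=\mathcal{B}(\Omega_j)\sharp H$ by \cite[Lemma 6.1]{AS0}. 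I expect the only real work — and the main obstacle — to be the case analysis verifying that each of the roughly thirty listed pairs is indeed realized by an automorphism in Table $1$ (and, conversely, that the pairs \emph{not} listed are genuinely not so related, which is implicit in the classification but need not be proved here); this is a matter of careful matching of grouplikes and action matrices rather than of any conceptual difficulty. Since this Corollary is quoted verbatim from \cite[Corollary 6.3]{Z} and the braided equivalence $^{H^{\ast}}_{H^{\ast}}\mathcal{YD}\simeq{^H_H\mathcal{YD}}$ transports both the module list and the automorphism action, one may alternatively just invoke that reference after noting the equivalence, which is the route the paper takes.
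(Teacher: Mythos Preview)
Your approach is essentially the paper's: the paper simply invokes \cite[Lemma 6.1]{AS0} together with the braided equivalence $^{H^\ast}_{H^\ast}\mathcal{YD}\simeq{^H_H\mathcal{YD}}$ to import the result from \cite[Corollary 6.3]{Z}, exactly as you note in your final sentence. Your more explicit plan of matching each pair to an automorphism from Table~1 is the computation that underlies that cited result, so there is no genuine difference in method.

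One small correction to your worked example: for the pair $(i,j)=(2,3)$ you do \emph{not} want an automorphism fixing $\{V_5,V_6,V_7,V_8\}$, because $\Omega_2$ is built from $V_5,\dots,V_8$ while $\Omega_3$ is built from $V_1,\dots,V_4$. What is needed is an automorphism sending $V_{4+k}\mapsto V_k$ (for $k=1,\dots,4$) and $M_1\mapsto M_2$. Your choice $\tau_9$ is in fact correct for this --- since $\tau_9(b)=ab$, $\tau_9(c)=ac$, it moves the comodule grouplike $b$ of $V_5$ to $ab$, that of $V_1$, and simultaneously flips the signs of the $b$- and $c$-actions, matching $V_5$ to $V_1$; likewise it sends the coaction data $(b,c)$ of $M_1$ to $(ab,ac)$ of $M_2$. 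So the example works, but your description of what it must accomplish was off.
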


Let $i,\ j,\ k,\ \ell,\ m,\ q,\ s,\ r\in \mathbb{N}^*$, denote
\begin{flalign*}
&\mathbbm{k}A_i\simeq V_1,\hspace{1em} \mathbbm{k}B_j\simeq V_2, \hspace{1em}\mathbbm{k}C_k\simeq V_3, \hspace{1em}\mathbbm{k}D_\ell\simeq V_4,\\
&\mathbbm{k}E_m\simeq V_5,\hspace{1em} \mathbbm{k}F_q\simeq V_6, \hspace{1em}\mathbbm{k}G_s\simeq V_7, \hspace{1em}\mathbbm{k}H_r\simeq V_8.
\end{flalign*}

\begin{defi}\label{def 1}
For $n_1,n_2,\ldots,n_8\in \mathbb{N}^*$ with $\sum_{i=1}^8 n_i\geq 1$ and
\begin{flalign*}
I_1=\{\bm{\lambda}=(\lambda_{i,\ell})_{n_1\times n_4},\ \bm{\mu}=(\mu_{j,k})_{n_2\times n_3}, \ \bm{\nu}=(\nu_{m,r})_{n_5\times n_8}, \ \bm{\gamma}=(\gamma_{q,s})_{n_6\times n_7} \}
\end{flalign*}
with entries in $\mathbbm{k}$, let us denote by $\mathfrak{U}_1(n_1,n_2,\ldots,n_8;I_1)$ the algebra that is generated by $a,~b,~c,~d,$
$\{A_i\}_{i=1,...,n_1}$, $\{B_j\}_{j=1,...,n_2}$, $\{C_k\}_{k=1,...,n_3}$, $\{D_\ell\}_{\ell=1,...,n_4}$,
$\{E_m\}_{m=1,...,n_5}$, $\{F_q\}_{q=1,...,n_6}$, $\{G_s\}_{s=1,...,n_7}$, $\{H_r\}_{r=1,...,n_8}$ satisfying relations $(\ref{3.1})-(\ref{3.2})$ and
\begin{flalign}
& & aA_i=-A_ia, \ bA_i=A_ib, \ cA_i=A_ic, \ dA_i=-\xi A_id, \label{A}\\
& & aB_j=-B_ja, \ bB_j=B_jb, \ cB_j=B_jc, \ dB_j=\xi B_jd,\label{B}\\
& & aC_k=-C_ka, \ bC_k=C_kb, \ cC_k=C_kc, \ dC_k=-\xi C_kd,\label{C}\\
& & aD_\ell=-D_\ell a, \ bD_\ell=D_\ell b, \ cD_\ell=D_\ell c, \ dD_\ell=\xi D_\ell d,&\label{D}\\
& & aE_m=-E_ma, \ bE_m=-E_mb, \ cE_m=-E_mc, \ dE_m=-\xi E_md,\label{E}\\
& & aF_q=-F_qa, \ bF_q=-F_qb, \ cF_q=-F_qc, \ dF_q=\xi F_qd,\label{F}\\
& & aG_s=-G_sa, \ bG_s=-G_sb, \ cG_s=-G_sc, \ dG_s=-\xi G_sd,\label{G}\\
& & aH_r=-H_ra, \ bH_r=-H_rb, \ cH_r=-H_rc, \ dH_r=\xi H_rd,\label{H}\\
& & A_{i_1}A_{i_2}+A_{i_2}A_{i_1}=0,  \ i_1,i_2\in \{1,\ldots,n_1\},&\label{Ai}\\
& & B_{j_1}B_{j_2}+B_{j_2}B_{j_1}=0,  \ j_1,j_2\in \{1,\ldots,n_2\},&\label{Bj}\\
& & C_{k_1}C_{k_2}+C_{k_2}C_{k_1}=0,  \ k_1,k_2\in \{1,\ldots,n_3\},&\label{Ck}\\
& & D_{\ell_1}D_{\ell_2}+D_{\ell_2}D_{\ell_1}=0,  \ \ell_1,\ell_2\in \{1,\ldots,n_4\},&\label{Dl}\\
& & E_{m_1}E_{m_2}+E_{m_2}E_{m_1}=0,  \ m_1,m_2\in \{1,\ldots,n_5\},&\label{Em}\\
& & F_{q_1}F_{q_2}+F_{q_2}F_{q_1}=0,  \ q_1,q_2\in \{1,\ldots,n_6\},&\label{Fq}\\
& & G_{s_1}G_{s_2}+G_{s_2}G_{s_1}=0,  \ s_1,s_2\in \{1,\ldots,n_7\},&\label{Gs}\\
& & H_{r_1}H_{r_2}+H_{r_2}H_{r_1}=0,  \ r_1,r_2\in \{1,\ldots,n_8\},&\label{Hr}\\
& & A_iD_\ell+D_\ell A_i=\lambda_{i,\ell}(1-bc),\ B_jC_k+C_kB_j=\mu_{j,k}(1-bc),\label{AD}\\
& & E_mH_r+H_rE_m=\nu_{m,r}(1-bc),\ F_qG_s+G_sF_q=\gamma_{q,s}(1-bc),\label{EH}\\
& & A_iB_j+B_jA_i=0,\ A_iC_k+C_kA_i=0,\ A_iE_m-E_mA_i=0,\\
& & A_iF_q-F_qA_i=0,\ A_iG_s-G_sA_i=0,\ A_iH_r-H_rA_i=0,\\
& & B_jD_\ell+D_\ell B_j=0,\ B_jE_m-E_mB_j=0,\ B_jF_q-F_qB_j=0,\\
& & B_jG_s-G_sB_j=0,\ B_jH_r-H_rB_j=0,\ C_kD_\ell+D_\ell C_k=0,\\
& & C_kE_m-E_mC_k=0,\ C_kF_q-F_qC_k=0,\ C_kG_s-G_sC_m=0,\\
& & C_kH_r-H_rC_k=0,\ D_\ell E_m-E_mD_\ell=0,\ D_\ell F_q-F_qD_\ell=0,\\
& & D_\ell G_s-G_sD_\ell=0, \ D_\ell H_r-H_rD_\ell=0,\ E_mF_q+F_qE_m=0,\\
& & E_mG_s+G_sE_m=0,\  F_qH_r+H_rF_q=0, \ G_sH_r+H_rG_s=0.\label{e1}
\end{flalign}
It is a Hopf algebra with its coalgebra structure determined by
\begin{align*}
&\triangle(A_i)=A_i\otimes 1+ab\otimes A_i,\quad \triangle(B_j)=B_j\otimes 1+ab\otimes B_j,\\
&\triangle(C_k)=C_k\otimes 1+ac\otimes C_k,\quad \triangle(D_\ell)=D_\ell\otimes 1+ac\otimes D_\ell,\\
&\triangle(E_m)=E_m\otimes 1+b\otimes E_m,\quad \triangle(F_q)=F_q\otimes 1+b\otimes F_q,\\
&\triangle(G_s)=G_s\otimes 1+c\otimes G_s,\quad \triangle(H_r)=H_r\otimes 1+c\otimes H_r.
\end{align*}
\end{defi}

\begin{defi}\label{def 2}
For $n_1,n_2,n_3,n_4\in \mathbb{N}^*$ with $\sum_{i=1}^4 n_i\geq 0$,
\begin{flalign*}
I_2=\{\bm{\nu}=(\nu_{m,r})_{n_1\times n_4}, \ \bm{\gamma}=(\gamma_{q,s})_{n_2\times n_3},\ \lambda\},
\end{flalign*}
let us denote by $\mathfrak{U}_2(n_1,n_2,n_3,n_4;I_2)$ the algebra that is generated by $a,\ b,\ c,\ d$, $p_1,\ p_2$, $\{E_m\}_{m=1,...,n_1},$ $\{F_q\}_{q=1,...,n_2},$
$\{G_s\}_{s=1,\ldots,n_3}, \{H_r\}_{r=1,\ldots,n_4}$, satisfying the relations $(\ref{3.1})-(\ref{3.2})$, $(\ref{E})-(\ref{H})$, $(\ref{Em})-(\ref{Hr})$,
$(\ref{EH})$ and
\begin{flalign}
& & ap_1=p_1a,\hspace{1em}bp_1=-p_1b,\hspace{1em}cp_1=-p_1c,\hspace{1em}dp_1=p_2d,\label{e2.4}\\
& & ap_2=p_2a,\hspace{1em}bp_2=-p_2b,\hspace{1em}cp_2=-p_2c,\hspace{1em}dp_2=p_1d,\label{e2.5}\\
& & p_1^2=0,\ \ p_2^2=0,\ \ p_1p_2+p_2p_1=\lambda(1-bc),\label{e2.1}\\
& & p_1E_m+E_mp_1=0,\ p_1F_q+F_qp_1=0,\ p_1G_s+G_sp_1=0,\ p_1H_r+H_rp_1=0,\label{e2.2}\\
& & p_2E_m+E_mp_2=0,\ p_2F_q+F_qp_2=0,\ p_2G_s+G_sp_2=0,\ p_2H_r+H_rp_2=0.\label{e2.3}
\end{flalign}
It is a Hopf algebra with its coalgebra structure determined by
\begin{align*}
&\triangle(p_1)=p_1\otimes 1+b\otimes p_1,
\quad \quad \quad\triangle(p_2)=p_2\otimes 1+c\otimes p_2,\\
&\triangle(E_m)=E_m\otimes 1+b\otimes E_m,\hspace{1.8em}\triangle(F_q)=F_q\otimes 1+b\otimes F_q,\\
&\triangle(G_s)=G_s\otimes 1+c\otimes G_s,\hspace{2.5em}\triangle(H_r)=H_r\otimes 1+c\otimes H_r.
\end{align*}
\end{defi}

\begin{defi}\label{def 9}
For $n_1,\ n_2,\ n_3,\ n_4\in \mathbb{N}^*$ with $\sum_{i=1}^4 n_i\geq 0$ and
\begin{equation*}
I_9=\{ \bm{\nu} =(\nu_{m,r})_{n_1\times n_4}, \bm{\gamma} =(\gamma_{q,s})_{n_2\times n_3},\bm{\lambda},\bm{\mu},\bm{\alpha},\bm{\beta}\},
\end{equation*}
where
$\bm{\lambda} =(\lambda_{m})_m$, $\bm{\mu} =(\mu_q)_q$, $\bm{\alpha} =(\alpha_{s})_s$ and $\bm{\beta} =(\beta_{r})_r$.
Let us denote by $\mathfrak{U}_9(n_1,n_2,n_3,n_4;I_9)$ the algebra that is generated by $a,b,c,d,p_1,\ p_2,\{E_m\}_{m=1,...,n_1},$ $\{F_q\}_{q=1,...,n_2},$
$\{G_s\}_{s=1,\ldots,n_3}, \{H_r\}_{r=1,\ldots,n_4}$, satisfying the relations $(\ref{3.1})-(\ref{3.2})$, $(\ref{E})-(\ref{H})$, $(\ref{Em})-(\ref{Hr})$, $(\ref{EH})$
and
\begin{flalign}
& & ap_1=-p_1a,\hspace{1em}bp_1=-p_1b,\hspace{1em}cp_1=-p_1c,\hspace{1em}dp_1=p_2d,\nonumber\\
& & ap_2=-p_2a,\hspace{1em}bp_2=-p_2b,\hspace{1em}cp_2=-p_2c,\hspace{1em}dp_2=-p_1d,\nonumber\\
& & p_1^2=0,\ \ p_2^2=0,\ \ p_1p_2+p_2p_1=0,\nonumber\\
& & p_1E_m+E_mp_1=\lambda_m(1-ab),\ p_2E_m+E_mp_2=\xi\lambda_m(1-ac),\label{e9.1}\\
& & p_1F_q+F_qp_1=\mu_q(1-ab),\ p_2F_q+F_qp_2=-\xi\mu_q(1-ac),\label{e9.2}\\
& & p_1G_s+G_sp_1=\alpha_s(1-ac),\ p_2G_s+G_sp_2=\xi\alpha_s(1-ab),\label{e9.3}\\
& & p_1H_r+H_rp_1=\beta_r(1-ac),\ p_2H_r+H_rp_2=-\xi\beta_r(1-ab).\label{e9.4}
\end{flalign}
It is a Hopf algebra with the same coalgebra structure as $\mathfrak{U}_2(n_1,n_2,n_3,n_4;I_2)$ except for
\begin{align*}
&\triangle(p_1)=p_1\otimes 1+a\otimes p_1,
\hspace{3em}\triangle(p_2)=p_2\otimes 1+abc\otimes p_2.
\end{align*}
\end{defi}

Now we determine all the liftings under the  given assumption and discuss the isomorphism classes of the defining Hopf algebras.

\begin{pro}\label{1}
$(1)$ Let $A$ be a finite-dimensional Hopf algebra over $H$ such that its infinitesimal braiding is isomorphic to $\Omega_1(n_1,n_2,\ldots,n_8)$ or
$\Omega_i(n_1,n_2,n_3,n_4)$ for $i\in\{2,9\}$, then $A\cong \mathfrak{U}_1(n_1,n_2,\ldots,n_8;I_1)$ or $\mathfrak{U}_i(n_1,n_2,n_3,n_4;I_i)$.

$(2)\ \mathfrak{U}_1(n_1,\ldots,n_8;I_1)\cong
\mathfrak{U}_1(n_1,\ldots,n_8;I_1^{'})$ if and only if  there exist
invertible matrices
\begin{align*}
&\bm{a}=(a_{i^{'}i})_{n_1\times n_1},\ &\bm{b}&=(b_{j^{'}j})_{n_2\times n_2},
\ &\bm{c}&=(c_{k^{'}k})_{n_3\times n_3}, \ &\bm{d}&=(d_{\ell^{'}\ell})_{n_4\times n_4},\\
&\bm{e}=(e_{m^{'}m})_{n_5\times n_5}, \ &\bm{f}&=(f_{q^{'}q})_{n_6\times n_6},
\ &\bm{g}&=(g_{s^{'}s})_{n_7\times n_7},\ &\bm{h}&=(h_{r^{'}r})_{n_8\times n_8},
\end{align*}
such that
\begin{align}
&\bm{a}^T\bm{\lambda'}\bm{d}=\bm{\lambda}, \ &\bm{b}^T\bm{\mu'}\bm{c}&=\bm{\mu},\label{Aut1.1.1}\\
&\bm{e}^T\bm{\nu'}\bm{h}=\bm{\nu}, \ &\bm{f}^T\bm{\gamma'}\bm{g}&=\bm{\gamma},\label{Aut1.1.2}
\end{align}
or $n_1=n_2,\ n_3=n_4,\ n_5=n_6,\ n_7=n_8$
such that
\begin{align}
&\bm{a}^T\bm{\mu'}\bm{d}=\bm{\lambda}, \quad &\bm{b}^T\bm{\lambda'}\bm{c}&=\bm{\mu},\label{Aut1.2.1}\\
&\bm{e}^T\bm{\gamma'}\bm{h}=\bm{\nu}, \quad &\bm{f}^T\bm{\nu'}\bm{g}&=\bm{\gamma},\label{Aut1.2.2}
\end{align}
or $n_1=n_3,\ n_2=n_4,\ n_5=n_7,\ n_6=n_8$
such that
\begin{align}
&\bm{a}^T(\bm{\mu'})^T\bm{d}=\bm{\lambda}, \quad &\bm{b}^T(\bm{\lambda'})^T\bm{c}&=\bm{\mu},\label{Aut1.3.1}\\
&\bm{e}^T(\bm{\gamma'})^T\bm{h}=\bm{\nu}, \quad &\bm{f}^T(\bm{\nu'})^T\bm{g}&=\bm{\gamma},\label{Aut1.3.2}
\end{align}
or $n_1=n_4,\ n_2=n_3,\ n_5=n_8,\ n_6=n_7$
such that
\begin{align}
&\bm{a}^T(\bm{\lambda'})^T\bm{d}=\bm{\lambda}, \quad &\bm{b}^T(\bm{\mu'})^T\bm{c}&=\bm{\mu},\label{Aut1.4.1}\\
&\bm{e}^T(\bm{\nu'})^T\bm{h}=\bm{\nu}, \quad &\bm{f}^T(\bm{\gamma'})^T\bm{g}&=\bm{\gamma},\label{Aut1.4.2}
\end{align}
or $n_1=n_5,\ n_2=n_6,\ n_3=n_7,\ n_4=n_8$
such that
\begin{align}
&\bm{a}^T\bm{\nu'}\bm{d}=\bm{\lambda}, \quad &\bm{b}^T\bm{\gamma'}\bm{c}&=\bm{\mu},\label{Aut1.5.1}\\
&\bm{e}^T\bm{\lambda'}\bm{h}=\bm{\nu}, \quad &\bm{f}^T\bm{\mu'}\bm{g}&=\bm{\gamma},\label{Aut1.5.2}
\end{align}
or $n_1=n_6,\ n_2=n_5,\ n_3=n_8,\ n_4=n_7$
such that
\begin{align}
&\bm{a}^T\bm{\gamma'}\bm{d}=\bm{\lambda}, \quad &\bm{b}^T\bm{\nu'}\bm{c}&=\bm{\mu},\label{Aut1.6.1}\\
&\bm{e}^T\bm{\mu'}\bm{h}=\bm{\nu}, \quad &\bm{f}^T\bm{\lambda'}\bm{g}&=\bm{\gamma},\label{Aut1.6.2}
\end{align}
or $n_1=n_7,\ n_2=n_8,\ n_3=n_5,\ n_4=n_6$
such that
\begin{align}
&\bm{a}^T(\bm{\gamma'})^T\bm{d}=\bm{\lambda}, \quad &\bm{b}^T(\bm{\nu'})^T\bm{c}&=\bm{\mu},\label{Aut1.7.1}\\
&\bm{e}^T(\bm{\mu'})^T\bm{h}=\bm{\nu}, \quad &\bm{f}^T(\bm{\lambda'})^T\bm{g}&=\bm{\gamma},\label{Aut1.7.2}
\end{align}
or $n_1=n_8,\ n_2=n_7,\ n_3=n_6,\ n_4=n_5$
such that
\begin{align}
&\bm{a}^T(\bm{\nu'})^T\bm{d}=\bm{\lambda}, \quad &\bm{b}^T(\bm{\gamma'})^T\bm{c}&=\bm{\mu},\label{Aut1.8.1}\\
&\bm{e}^T(\bm{\lambda'})^T\bm{h}=\bm{\nu}, \quad &\bm{f}^T(\bm{\mu'})^T\bm{g}&=\bm{\gamma},\label{Aut1.8.2}
\end{align}
or $n_1=n_3,\ n_2=n_4$
satisfying $(\ref{Aut1.3.1})$
or $n_1=n_4,\ n_2=n_3$, $n_5=n_6,\ n_7=n_8$
satisfying $(\ref{Aut1.2.2})$, $(\ref{Aut1.4.1})$
or $n_5=n_7,\ n_6=n_8$
satisfying $(\ref{Aut1.3.2})$
or $n_1=n_2,\ n_3=n_4$, $n_5=n_8,\ n_6=n_7$
satisfying $(\ref{Aut1.2.1})$, $(\ref{Aut1.4.2})$
or $n_1=n_3=n_5=n_7,\ n_2=n_4=n_6=n_8$
satisfying $(\ref{Aut1.7.1})$ and
\begin{align}
&\bm{e}^T\bm{\lambda'}\bm{h}=\bm{\nu}, \quad &\bm{f}^T\bm{\mu'}\bm{g}&=\bm{\gamma},\label{Aut1.9}
\end{align}
or $n_1=n_3=n_6=n_8,~ n_2=n_4=n_5=n_7$
satisfying $(\ref{Aut1.8.1})$ and
\begin{align}
&\bm{e}^T\bm{\mu'}\bm{h}=\bm{\nu}, \quad &\bm{f}^T\bm{\lambda'}\bm{g}&=\bm{\gamma},\label{Aut1.10}
\end{align}
or $n_1=n_3=n_5=n_7, ~n_2=n_4=n_6=n_8$
satisfying $(\ref{Aut1.5.1})$ and
\begin{align}
&\bm{e}^T(\bm{\mu'})^T\bm{h}=\bm{\nu}, \quad &\bm{f}^T(\bm{\lambda'})^T\bm{g}&=\bm{\gamma},\label{Aut1.11}
\end{align}
or $n_1=n_3=n_6=n_8, ~n_2=n_4=n_5=n_7$
satisfying $(\ref{Aut1.6.1})$ and
\begin{align}
&\bm{e}^T(\bm{\lambda'})^T\bm{h}=\bm{\nu}, \quad &\bm{f}^T(\bm{\mu'})^T\bm{g}&=\bm{\gamma}.\label{Aut1.12}
\end{align}

$(3) ~\mathfrak{U}_2(n_1,n_2,n_3,n_4;I_2)\cong
\mathfrak{U}_2(n_1,n_2,n_3,n_4;I_2^{'})$ if and only if there exist
nonzero parameter $\beta$ satisfying $\beta^2 \lambda^{'}=\lambda$
and invertible matrices
\begin{align*}
&\bm{e}=(e_{m^{'}m})_{n_1\times n_1}, \ &\bm{f}&=(f_{q^{'}q})_{n_2\times n_2},
\ &\bm{g}&=(g_{s^{'}s})_{n_3\times n_3},\ &\bm{h}&=(h_{r^{'}r})_{n_4\times n_4},
\end{align*}
satisfying $(\ref{Aut1.1.2})$ or
$n_1=n_2,~n_3=n_4$
satisfying $(\ref{Aut1.2.2})$
or $n_1=n_3,~ n_2=n_4$
satisfying $(\ref{Aut1.3.2})$
or $n_1=n_4, ~n_2=n_3$
satisfying $(\ref{Aut1.4.2})$.

$(4)$ $\mathfrak{U}_9(n_1,n_2,n_3,n_4;I_9)\cong
\mathfrak{U}_9(n_1,n_2,n_3,n_4;I_9^{'})$ if and only if  there exist
nonzero parameter $z$ and invertible matrices
\begin{align*}
&\bm{e}=(e_{m^{'}m})_{n_1\times n_1}, \ &\bm{f}&=(f_{q^{'}q})_{n_2\times n_2},
\ &\bm{g}&=(g_{s^{'}s})_{n_3\times n_3},\ &\bm{h}&=(h_{r^{'}r})_{n_4\times n_4},
\end{align*}
satisfying $(\ref{Aut1.1.2})$ and
\begin{align}
&z\bm{e}^T\bm{\lambda'}=\bm{\lambda},  &z\bm{f}^T\bm{\mu'}&=\bm{\mu},  &z\bm{g}^T\bm{\alpha'}&=\bm{\alpha},  &z\bm{h}^T\bm{\beta'}&=\bm{\beta},
\end{align}
or
\begin{align}
&z\xi\bm{e}^T\bm{\lambda'}=\bm{\lambda},  &-z\xi\bm{f}^T\bm{\mu'}&=\bm{\mu},  &z\xi\bm{g}^T\bm{\alpha'}&=\bm{\alpha},  &-z\xi\bm{h}^T\bm{\beta'}&=\bm{\beta},
\end{align}
or $n_1=n_2,\ n_3=n_4$ and there exist invertible matrices
$\bm{e}=(e_{q^{'}m})_{n_1\times n_1}$, $\bm{f}=(f_{m^{'}q})_{n_1\times n_1}$,
$\bm{g}=(g_{r^{'}s})_{n_3\times n_3}$, $\bm{h}=(h_{s^{'}r})_{n_3\times n_3}$
satisfying $(\ref{Aut1.2.2})$ and
\begin{align}
&z\bm{e}^T\bm{\mu'}=\bm{\lambda},  &z\bm{f}^T\bm{\lambda'}&=\bm{\mu},  &z\bm{g}^T\bm{\beta'}&=\bm{\alpha},  &z\bm{h}^T\bm{\alpha'}&=\bm{\beta},
\end{align}
or
\begin{align}
&-z\xi\bm{e}^T\bm{\mu'}=\bm{\lambda},  &z\xi\bm{f}^T\bm{\lambda'}&=\bm{\mu},  &-z\xi\bm{g}^T\bm{\beta'}&=\bm{\alpha},  &z\xi\bm{h}^T\bm{\alpha'}&=\bm{\beta},
\end{align}
or $n_1=n_3,\ n_2=n_4$ and there exist invertible matrices
$\bm{e}=(e_{s^{'}m})_{n_1\times n_1}$, $\bm{f}=(f_{r^{'}q})_{n_2\times n_2}$,
$\bm{g}=(g_{m^{'}s})_{n_1\times n_1}$, $\bm{h}=(h_{q^{'}r})_{n_2\times n_2}$
satisfying $(\ref{Aut1.3.2})$ and
\begin{align}
&z\bm{e}^T\bm{\alpha'}=\bm{\lambda},  &z\bm{f}^T\bm{\beta'}&=\bm{\mu},  &z\bm{g}^T\bm{\lambda'}&=\bm{\alpha},  &z\bm{h}^T\bm{\mu'}&=\bm{\beta},
\end{align}
or
\begin{align}
&z\xi\bm{e}^T\bm{\alpha'}=\bm{\lambda},  &-z\xi\bm{f}^T\bm{\beta'}&=\bm{\mu},  &z\xi\bm{g}^T\bm{\lambda'}&=\bm{\alpha},  &-z\xi\bm{h}^T\bm{\mu'}&=\bm{\beta},
\end{align}
or $n_1=n_4,\ n_2=n_3$ and there exist invertible matrices
$\bm{e}=(e_{r^{'}m})_{n_1\times n_1}$, $\bm{f}=(f_{s^{'}q})_{n_2\times n_2}$,
$\bm{g}=(g_{q^{'}s})_{n_2\times n_2}$, $\bm{h}=(h_{m^{'}r})_{n_1\times n_1}$
satisfying $(\ref{Aut1.4.2})$ and
\begin{align}
&z\bm{e}^T\bm{\beta'}=\bm{\lambda},  &z\bm{f}^T\bm{\alpha'}&=\bm{\mu},  &z\bm{g}^T\bm{\mu'}&=\bm{\alpha},  &z\bm{h}^T\bm{\lambda'}&=\bm{\beta},
\end{align}
or
\begin{align}
&-z\xi\bm{e}^T\bm{\beta'}=\bm{\lambda},  &z\xi\bm{f}^T\bm{\alpha'}&=\bm{\mu},  &-z\xi\bm{g}^T\bm{\mu'}&=\bm{\alpha},  &z\xi\bm{h}^T\bm{\lambda'}&=\bm{\beta}.
\end{align}
\end{pro}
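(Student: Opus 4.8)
\textbf{Proof proposal for Proposition \ref{1}.}

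The plan is to prove the four assertions in turn, using the standard lifting-method machinery. For part (1), I would start from a finite-dimensional Hopf algebra $A$ over $H$ whose infinitesimal braiding is $\Omega_1(n_1,\ldots,n_8)$ (resp.\ $\Omega_i(n_1,n_2,n_3,n_4)$, $i\in\{2,9\}$). By the remarks preceding Corollary \ref{def3.1}'s section — namely \cite[Theorem 6.1]{Z} and \cite[Lemma 6.1]{AS0} — the associated graded Hopf algebra $\mathrm{gr}\,A$ is the bosonization $\mathcal{B}(\Omega)\sharp H$ of the relevant Nichols algebra, which by Theorem A is a tensor product of the rank-one Nichols algebras $\mathcal{B}(V_j)$ and the relevant $\mathcal{B}(M_k)$. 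Choosing preimages in $A$ of the generators $v$ of each $V_j$ (and of the basis vectors $v_1,v_2$ of each $M_k$), I would compute their comultiplications using that they lie in $A_1$, forcing them to be skew-primitive with the group-likes dictated by the coaction in Proposition \ref{simple}; this fixes the coalgebra structure as stated in Definitions \ref{def 1}, \ref{def 2}, \ref{def 9}. Then I would analyze the possible lifting relations: each quadratic relation of $\mathcal{B}(\Omega)$, say $x y \pm y x = 0$, must lift in $A$ to $x y \pm y x = $ (an element of $A$ of lower filtration degree with the correct $(g,h)$-bidegree). Computing $\mathcal{P}_{g,h}(H)$ for the relevant pairs $(g,h)$ — which are products of the group-likes $a,b,c$ appearing in the coactions — tells me exactly which relations can deform: a relation can acquire a nonzero right-hand side only when the target $(g,h)$-skew-primitive space of $H$ is nontrivial, and the only such nontrivial spaces are spanned by multiples of $1-bc$ (since $\triangle(1-bc)=1\otimes 1 - bc\otimes bc = (1-bc)\otimes 1 + bc\otimes(1-bc)$, giving a $(1,bc)$-primitive). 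This is the source of the parameters $\bm\lambda,\bm\mu,\bm\nu,\bm\gamma$ in $I_1$, of $\lambda$ in $I_2$, and — via the analogous computation with $1-ab$, $1-ac$ which are $(1,ab)$- and $(1,ac)$-primitives because $a$ is central and $ab,ac$ are group-like — of $\bm\lambda,\bm\mu,\bm\alpha,\bm\beta$ in $I_9$. One checks that these deformed relations are forced to be \emph{compatible} (the would-be higher relations vanish because $\dim A = \dim \mathrm{gr}\,A$), so $A$ is a quotient of the free algebra on the stated generators modulo the stated relations; a dimension count via a PBW-type basis (inherited from $\mathcal{B}(\Omega)\sharp H$) shows this quotient has the right dimension, hence $A\cong \mathfrak{U}_1$ (resp.\ $\mathfrak{U}_2,\mathfrak{U}_9$).

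For parts (2), (3), (4) — the isomorphism classifications — I would argue that any Hopf algebra isomorphism $\phi:\mathfrak{U}\to\mathfrak{U}'$ restricts to an automorphism of the coradical $H$ (the coradical is functorial), hence is given by one of the $32$ automorphisms $\tau_1,\ldots,\tau_{32}$ of Table $1$; conversely each such $\tau$, together with a linear change of the skew-primitive generators, may or may not extend to an isomorphism. So the strategy is: for each $\tau_k$, determine how it permutes the group-likes $a,b,c$ (hence how it permutes the relevant one-dimensional Yetter-Drinfeld modules $V_1,\ldots,V_8$, i.e.\ the characters $\chi_{i,j,k,l}$, and the $M$'s), and then track what it does to the generators. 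An isomorphism must send the span of the $A_i$'s (the copies of $V_1$) to the span of the generators of the same isotype in $\mathfrak{U}'$; a change of basis on each isotypic block is recorded by one of the invertible matrices $\bm a,\ldots,\bm h$ (or $\bm e,\bm f,\bm g,\bm h$ in the $M$-only cases). Applying $\phi$ to the defining relation $A_iD_\ell+D_\ell A_i=\lambda_{i,\ell}(1-bc)$ and using $\phi(1-bc)=1-\tau(b)\tau(c)$, together with how $d$ rescales (the $\tau_{17},\ldots,\tau_{32}$ multiply $d$ by $\frac12[(1\pm\xi)+(1\mp\xi)bc]$, which acts as a scalar $\xi^{\pm 1}$ on the relevant eigenspaces), yields precisely the matrix identities $\bm a^T\bm\lambda'\bm d=\bm\lambda$ etc., or the permuted/transposed variants $(\ref{Aut1.2.1})$–$(\ref{Aut1.8.2})$ when $\tau$ swaps $b\leftrightarrow c$ or interchanges the roles of the four blocks. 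The extra scalars $\beta$ (with $\beta^2\lambda'=\lambda$) in part (3) and $z$, $z\xi$, $-z\xi$ in part (4) come from the freedom in rescaling the new group-like-type generators $p_1,p_2$ and from the interaction of $\tau$ with the $d$-action on $p_1,p_2$ (note $dp_1=p_2d$, $dp_2=\pm p_1 d$), which is why part (4) produces the doubled list of cases with alternating signs on the $\bm\mu,\bm\beta$ equations.

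The bookkeeping is organized by first proving the "only if" direction (restrict to coradical, classify by Table $1$, extract matrix conditions), then the "if" direction (given matrices satisfying the stated identities, explicitly build the inverse isomorphism and check it respects all relations and the coalgebra structure). In $\mathfrak{U}_1$ the presence of \emph{four} independent parameter matrices and the $S_4$-like symmetry among the eight modules $V_1,\ldots,V_8$ (coming from the action of $\mathrm{Aut}(H)$ permuting the characters) is what generates the long list of cases $(\ref{Aut1.1.1})$–$(\ref{Aut1.12})$; for $\mathfrak{U}_2$ and $\mathfrak{U}_9$ only the four modules $V_5,\ldots,V_8$ survive together with the $M$-block, cutting the list down to four (resp.\ eight signed) cases.

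The main obstacle I anticipate is the isomorphism direction: one must be exhaustive over all $32$ automorphisms of $H$ \emph{and} over the possible block permutations they induce on the generators, and for each the computation $\phi(A_iD_\ell+D_\ell A_i)=\phi(\lambda_{i,\ell}(1-bc))$ has to be pushed through carefully — in particular keeping track of the eigenvalue $\xi^{\pm1}$ by which each $\tau_{17},\ldots,\tau_{32}$ scales $d$ on the relevant weight space, since a sign error there corrupts the transposition pattern in the matrix identities. The merging of some cases (e.g.\ why $n_1=n_3,n_2=n_4$ satisfying $(\ref{Aut1.3.1})$ appears again near the end of the list) requires noticing when two different $\tau_k$'s give the \emph{same} matrix condition, which is a finite but delicate combinatorial check against the group structure $\mathbb{Z}_2\times M_2(2,1,1)$ of $\mathrm{Aut}(H)$. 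Everything else — verifying that the displayed relations are consistent (Diamond Lemma / PBW basis) and that the deformation parameters live exactly where claimed — is routine once the skew-primitive spaces $\mathcal P_{g,h}(H)$ have been computed.
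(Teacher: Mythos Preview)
Your proposal is correct and follows essentially the same route as the paper: for (1) you use \cite[Theorem 6.1]{Z} to identify $\mathrm{gr}\,A$, compute the skew-primitive spaces $\mathcal{P}_{1,g}(H)$ to pin down the allowed deformations of each quadratic relation, and finish with a Diamond Lemma / PBW dimension count; for (2)--(4) you restrict an isomorphism to the coradical, run through the $32$ automorphisms $\tau_k$ of Table~1, and extract the matrix identities by tracking how each $\tau_k$ permutes the isotypic blocks and rescales via the $d$-action. The paper does exactly this, pairing the $\tau_k$'s (e.g.\ $\tau_1,\tau_3$ give the same condition) and using the relation $dA_i=-\xi A_id$ versus $dB_j=\xi B_jd$ to separate $A_i$ from $B_j$ inside the common $(1,ab)$-primitive space---a detail you allude to under ``same isotype'' but should make explicit when writing it out.
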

\begin{proof}
$(1)$ We prove the claim for $\Omega_1(n_1,n_2,\ldots,n_8)$. The proofs for other cases follow the same lines. By {\rm\cite [Theorem 6.1]{Z}}, we have that gr$(A)\cong \mathcal{B}(\Omega_1(n_1,n_2,\ldots,n_8)$
$)\sharp H$. We can check the relations listed in Definition \ref{def 1} except $(\ref{Ai})-(\ref{e1})$ hold in $A$ from the bosonization $\mathcal{B}(\Omega_1(n_1,n_2,\ldots,n_8))\sharp H$.
As $\triangle(A_i)=A_i\otimes 1+ab\otimes A_i, ~\triangle(D_\ell)=D_\ell\otimes 1+ac\otimes D_\ell,$ so
\begin{flalign*}
&\triangle(A_{i_1}A_{i_2}+A_{i_2}A_{i_1})=(A_{i_1}A_{i_2}+A_{i_2}A_{i_1})\otimes 1+1\otimes (A_{i_1}A_{i_2}+A_{i_2}A_{i_1}),\\
&\triangle(A_iD_\ell+D_\ell A_i)=(A_iD_\ell+D_\ell A_i)\otimes 1+bc\otimes (A_iD_\ell+D_\ell A_i).
\end{flalign*}
Then $A_iD_\ell+D_\ell A_i\in \mathcal{P}_{1,bc}(\mathcal{B}(\Omega_1(n_1,n_2,\ldots,n_8))\sharp H)=\mathcal{P}_{1,bc}(H)=\mathbbm{k}\{1-bc\}$. It follows that $A_{i_1}A_{i_2}+A_{i_2}A_{i_1}=0,~A_iD_\ell+D_\ell A_i=\lambda_{i,\ell}(1-bc)$ for some $\lambda_{i,\ell}\in\mathbbm{k}$.

Similarly, other relations hold in $A$. Then there is a surjective Hopf morphism from $\mathfrak{U}_1(n_1,n_2,\ldots,n_8;I_1)$ to $A$. We can observe that each element of $\mathfrak{U}_1(n_1,n_2,\ldots,n_8;I_1)$ can be expressed by a linear
combination of
\begin{equation*}
\begin{split}
\{A_i^{\alpha_i}B_j^{\beta_j}C_k^{\gamma_k}
D_\ell^{\eta_\ell}E_m^{\zeta_m}F_q^{\theta_q}G_s^{\lambda_s}H_r^{\mu_r}a^eb^fc^gd^h
&\mid \alpha_i,\beta_j,\gamma_k,\eta_\ell,\zeta_m,\theta_q,\lambda_s,\mu_r,\\
&\quad
e,f,g,h\in\mathbb{I}_{0,1}\}.
\end{split}
\end{equation*}
In fact, the set is a basis of
$\mathfrak{U}_1(n_1,n_2,\ldots,n_8;I_1)$ according to the Diamond Lemma \cite{G}.
Then $\dim A$ = $\dim \mathfrak{U}_1(n_1,n_2,\ldots,n_8;I_1)$ and
whence $A\cong \mathfrak{U}_1(n_1,n_2,\ldots,n_8;I_1)$.

$(2)$ Suppose that $\Phi : \mathfrak{U}_1(n_1,n_2,\ldots,n_8;I_1)\rightarrow \mathfrak{U}_1(n_1,n_2,\ldots,n_8;I_1^{'})$ is an isomorphism as
Hopf algebras.
When $\Phi|_H=\tau_1$ or $\tau_3$, $\Phi(A_i)$ is $(1,ab)$-skew primitive,
\begin{equation*}
\Phi(A_i)\in \bigoplus_{i^{'}=1}^{n_1}\mathbbm{k}A_{i^{'}}^{'} \bigoplus_{j=1}^{n_2}\mathbbm{k}B_{j}^{'}\bigoplus \mathbbm{k}(1-ab).
\end{equation*}
$aA_i = -A_ia$, $dA_i=-\xi A_id,\ dB_{j}=\xi B_{j}d$ imply that $\Phi(A_i)$ does not contain terms from $1-ab$ nor $\bigoplus_{j=1}^{n_2}\mathbbm{k}B_{j}^{'}$. So there exists an invertible matrix $(a_{i^{'}i})_{n_1\times n_1}$ such that $\Phi(A_i)=\sum\limits_{i^{'}=1}^{n_1}a_{i^{'}i}A_{i^{'}}^{'}$. Similarly
$\Phi(B_j)=\sum\limits_{j^{'}=1}^{n_2}b_{j^{'}j}B_{j^{'}}^{'}$,
\begin{flalign*}
&\Phi(C_k)=\sum\limits_{k^{'}=1}^{n_3}c_{k^{'}k}C_{k^{'}}^{'},
\quad\Phi(D_\ell)=\sum\limits_{\ell^{'}=1}^{n_4}d_{\ell^{'}\ell}D_{\ell^{'}}^{'},
\quad\Phi(E_m)=\sum\limits_{m^{'}=1}^{n_5}e_{m^{'}m}E_{m^{'}}^{'},\\
&\Phi(F_q)=\sum\limits_{q^{'}=1}^{n_6}f_{q^{'}q}F_{q^{'}}^{'},
\quad\Phi(G_s)=\sum\limits_{s^{'}=1}^{n_7}g_{s^{'}s}G_{s^{'}}^{'},
\quad\Phi(H_r)=\sum\limits_{r^{'}=1}^{n_8}h_{r^{'}r}H_{r^{'}}^{'}.
\end{flalign*}
Then we get that
\begin{flalign*}
&\sum\limits_{i^{'}=1}^{n_1}\sum\limits_{\ell^{'}=1}^{n_4}
a_{i^{'}i}d_{\ell^{'}\ell}\lambda_{i^{'},\ell^{'}}^{'}=\lambda_{i,\ell},
\hspace{5em}\sum\limits_{j^{'}=1}^{n_2}\sum\limits_{k^{'}=1}^{n_3}
b_{j^{'}j}c_{k^{'}k}\mu_{j^{'},k^{'}}^{'}=\mu_{j,k},\\
&\sum\limits_{m^{'}=1}^{n_5}\sum\limits_{r^{'}=1}^{n_8}
e_{m^{'}m}h_{r^{'}r}\nu_{m^{'},r^{'}}^{'}=\nu_{m,r},
\hspace{2.8em}\sum\limits_{q^{'}=1}^{n_6}\sum\limits_{s^{'}=1}^{n_7}
f_{q^{'}q}g_{s^{'}s}\gamma_{q^{'},s^{'}}^{'}=\gamma_{q,s}.
\end{flalign*}
Hence the relations $(\ref{Aut1.1.1})$, $(\ref{Aut1.1.2})$ hold.
Similarly, $\Phi$ is an isomorphism of Hopf algebras if and only if

\hspace{1em}when $\Phi|_H=\tau_2$ or $\tau_4$, the relations $(\ref{Aut1.2.1})$, $(\ref{Aut1.2.2})$ hold;

\hspace{1em}when $\Phi|_H=\tau_5$ or $\tau_7$, the relations $(\ref{Aut1.3.1})$, $(\ref{Aut1.3.2})$ hold;

\hspace{1em}when $\Phi|_H=\tau_6$ or $\tau_8$, the relations $(\ref{Aut1.4.1})$, $(\ref{Aut1.4.2})$ hold;

\hspace{1em}when $\Phi|_H=\tau_9$ or $\tau_{11}$, the relations $(\ref{Aut1.5.1})$, $(\ref{Aut1.5.2})$ hold;

\hspace{1em}when $\Phi|_H=\tau_{10}$ or $\tau_{12}$, the relations $(\ref{Aut1.6.1})$, $(\ref{Aut1.6.2})$ hold;

\hspace{1em}when $\Phi|_H=\tau_{13}$ or $\tau_{15}$, the relations $(\ref{Aut1.7.1})$, $(\ref{Aut1.7.2})$ hold;

\hspace{1em}when $\Phi|_H=\tau_{14}$ or $\tau_{16}$, the relations $(\ref{Aut1.8.1})$, $(\ref{Aut1.8.2})$ hold;

\hspace{1em}when $\Phi|_H=\tau_{17}$ or $\tau_{18}$, the relation $(\ref{Aut1.3.1})$ holds;

\hspace{1em}when $\Phi|_H=\tau_{19}$ or $\tau_{20}$, the relations $(\ref{Aut1.2.2})$, $(\ref{Aut1.4.1})$ hold;

\hspace{1em}when $\Phi|_H=\tau_{21}$ or $\tau_{22}$, the relation $(\ref{Aut1.3.2})$ holds;

\hspace{1em}when $\Phi|_H=\tau_{23}$ or $\tau_{24}$, the relations $(\ref{Aut1.2.1})$, $(\ref{Aut1.4.2})$ hold;

\hspace{1em}when $\Phi|_H=\tau_{25}$ or $\tau_{26}$, the relations $(\ref{Aut1.7.1})$, $(\ref{Aut1.9})$ hold;

\hspace{1em}when $\Phi|_H=\tau_{27}$ or $\tau_{28}$, the relations $(\ref{Aut1.8.1})$, $(\ref{Aut1.10})$ hold;

\hspace{1em}when $\Phi|_H=\tau_{29}$ or $\tau_{30}$, the relations $(\ref{Aut1.5.1})$, $(\ref{Aut1.11})$ hold;

\hspace{1em}when $\Phi|_H=\tau_{31}$ or $\tau_{32}$, the relations $(\ref{Aut1.6.1})$, $(\ref{Aut1.12})$ hold.

$(3)$ Suppose that $\Phi: \mathfrak{U}_2(n_1,n_2,n_3,n_4;I_2)\rightarrow \mathfrak{U}_2(n_1,n_2,n_3,n_4;I_2^{'})$ is an isomorphism of
Hopf algebras. Then $\Phi|_H\in \{\tau_1,\ldots,\tau_{8},\tau_{17},\ldots,\tau_{24}\}$. If $\Phi|_H\in\{\tau_1,\tau_3,\tau_{17},\tau_{18}\}$, then $\Phi(p_1)=\beta p_1^{'}$,
$\Phi(p_2)=\beta p_2^{'}$,
whence the relations $\beta^2 \lambda^{'}=\lambda$ and $(\ref{Aut1.1.2})$ hold.

\hspace{1em}When $\Phi|_H\in\{\tau_2,\tau_4,\tau_{19},\tau_{20}\}$, the relations $\beta^2 \lambda^{'}=\lambda$ and $(\ref{Aut1.2.2})$ hold;

\hspace{1em}when $\Phi|_H\in\{\tau_5,\tau_{7},\tau_{21},\tau_{22}\}$, the relations $\beta^2 \lambda^{'}=\lambda$ and $(\ref{Aut1.3.2})$ hold;

\hspace{1em}when $\Phi|_H\in\{\tau_{6},\tau_{8},\tau_{23},\tau_{24}\}$, the relations $\beta^2 \lambda^{'}=\lambda$ and $(\ref{Aut1.4.2})$ hold.

The proof for $(4)$ is completely analogous.
\end{proof}

\begin{defi}\label{def 14}
For a set of parameters $I_{14}=\{\lambda,\ \mu,\ \alpha \}$, denote by $\mathfrak{U}_{14}(I_{14})$ the algebra that is generated by $a,\ b,\ c,\ d$, $p_1,\ p_2$, $q_1,\ q_2$, satisfying the relations $(\ref{3.1})-(\ref{3.2})$, $(\ref{e2.4})-(\ref{e2.1})$ and
\begin{flalign}
& & aq_1=q_1a,\ \ bq_1=-q_1b,\ \ cq_1=-q_1c,\ \ dq_1=q_2d,\label{e14.1}\\
& & aq_2=q_2a,\ \ bq_2=-q_2b,\ \ cq_2=-q_2c,\ \ dq_2=q_1d,\label{e14.2}\\
& & q_1^2=0,\ \ q_2^2=0,\ \ q_1q_2+q_2q_1=\mu(1-bc),\label{e14.3}\\
& & p_1q_1+q_1p_1=0,\ \ p_2q_2+q_2p_2=0,\quad p_1q_2+q_2p_1=p_2q_1+q_1p_2=\alpha(1-bc).\label{e14.4}
\end{flalign}
It is a Hopf algebra with its coalgebra structure determined by
\begin{align*}
&\triangle(p_1)=p_1\otimes 1+b\otimes p_1,
\hspace{2.8em}\triangle(p_2)=p_2\otimes 1+c\otimes p_2,\\
&\triangle(q_1)=q_1\otimes 1+b\otimes q_1,
\hspace{2.8em}\triangle(q_2)=q_2\otimes 1+c\otimes q_2.
\end{align*}
\end{defi}

\begin{defi}\label{def 15}
For a set of parameters $I_{15}=\{\lambda,\ \mu,\ \alpha,\ \nu \}$, denote by $\mathfrak{U}_{15}(I_{15})$ the algebra that is generated by $a,\ b,\ c,\ d$, $p_1,\ p_2$, $q_1,\ q_2$, satisfying the relations $(\ref{3.1})-(\ref{3.2})$, $(\ref{e2.4})-(\ref{e2.1})$, $(\ref{e14.1})-(\ref{e14.3})$ and
\begin{flalign*}
& & p_1q_1+q_1p_1=p_2q_2+q_2p_2=\nu(1-a),\ p_1q_2+q_2p_1=p_2q_1+q_1p_2=\alpha(1-abc).
\end{flalign*}
It is a Hopf algebra with its coalgebra structure determined by
\begin{align*}
&\triangle(p_1)=p_1\otimes 1+b\otimes p_1,
\hspace{2.8em}\triangle(p_2)=p_2\otimes 1+c\otimes p_2,\\
&\triangle(q_1)=q_1\otimes 1+ab\otimes q_1,
\hspace{2.8em}\triangle(q_2)=q_2\otimes 1+ac\otimes q_2.
\end{align*}
\end{defi}

\begin{defi}\label{def 16}
Denote by $\mathfrak{U}_{16}(\lambda)$ the algebra that is generated by $a,\ b,\ c,\ d$, $p_1,\ p_2$, $q_1,\ q_2$, satisfying the relations $(\ref{3.1})-(\ref{3.2})$, $(\ref{e2.4})-(\ref{e2.1})$ and
\begin{flalign}
& & aq_1=-q_1a,\hspace{1em}bq_1=q_1b,\hspace{1em}cq_1=q_1c,\hspace{1em}dq_1=q_2d,\label{e16.1}\\
& & aq_2=-q_2a,\hspace{1em}bq_2=q_2b,\hspace{1em}cq_2=q_2c,\hspace{1em}dq_2=-q_1d,\label{e16.2}\\
& & q_1^2=0,\hspace{2em}q_2^2=0,\hspace{2em}q_1q_2+q_2q_1=0,\nonumber\\
& & p_1q_1-q_1p_1=0,\ p_2q_2-q_2p_2=0,\ p_1q_2-q_2p_1=0, \ p_2q_1-q_1p_2=0.\nonumber
\end{flalign}
It is a Hopf algebra with its coalgebra structure determined by
\begin{align*}
&\triangle(p_1)=p_1\otimes 1+b\otimes p_1,
\hspace{2.8em}\triangle(p_2)=p_2\otimes 1+c\otimes p_2,\\
&\triangle(q_1)=q_1\otimes 1+a\otimes q_1,
\hspace{2.8em}\triangle(q_2)=q_2\otimes 1+abc\otimes q_2.
\end{align*}
\end{defi}

\begin{defi}\label{def 20}
Denote by $\mathfrak{U}_{20}(\lambda)$ the algebra that is generated by $a,\ b,\ c,\ d$, $p_1,\ p_2$, $q_1,\ q_2$, satisfying the relations $(\ref{3.1})-(\ref{3.2})$ and
\begin{flalign}
& & ap_1=p_1a,\hspace{1em}bp_1=p_1b,\hspace{1em}cp_1=-p_1c,\hspace{1em}dp_1=p_2ad,\label{e20.1}\\
& & ap_2=p_2a,\hspace{1em}bp_2=-p_2b,\hspace{1em}cp_2=p_2c,\hspace{1em}dp_2=p_1ad,\label{e20.2}\\
& & aq_1=q_1a,\hspace{1em}bq_1=-q_1b,\hspace{1em}cq_1=q_1c,\hspace{1em}dq_1=q_2ad,\label{e20.3}\\
& & aq_2=q_2a,\hspace{1em}bq_2=q_2b,\hspace{1em}cq_2=-q_2c,\hspace{1em}dq_2=q_1ad,\label{e20.4}\\
& & p_1^2=0,\hspace{1em}p_2^2=0,\hspace{1em}p_1p_2+p_2p_1=0,~
\hspace{1em}q_1^2=0,\hspace{1em}q_2^2=0,\hspace{1em}q_1q_2+q_2q_1=0,\label{e20.5}\\
& & p_1q_1+q_1p_1=0,\ p_2q_2+q_2p_2=0,\ p_1q_2+q_2p_1=p_2q_1+q_1p_2=\lambda(1-a).\label{e20.6}
\end{flalign}
It is a Hopf algebra with its coalgebra structure determined by
\begin{align*}
&\triangle(p_1)=p_1\otimes 1+bc\otimes p_1,
\triangle(p_2)=p_2\otimes 1+abc\otimes p_2,\\
&\triangle(q_1)=q_1\otimes 1+bc\otimes q_1,
~~~\triangle(q_2)=q_2\otimes 1+abc\otimes q_2.
\end{align*}
\end{defi}

\begin{defi}\label{def 23}
Denote by $\mathfrak{U}_{23}(\lambda)$ the algebra that is generated by $a,\ b,\ c,\ d$, $p_1,\ p_2$, $q_1,\ q_2$, satisfying the relations $(\ref{3.1})-(\ref{3.2})$, $(\ref{e20.1})-(\ref{e20.4})$ and
\begin{flalign}
& & p_1^2=0,\hspace{1em}p_2^2=0,\hspace{1em}p_1p_2-p_2p_1=0,~
\hspace{1em}q_1^2=0,\hspace{1em}q_2^2=0,\hspace{1em}q_1q_2-q_2q_1=0,\label{e23.1}\\
& & p_1q_1-q_1p_1=0,\ p_2q_2-q_2p_2=0,\ p_1q_2+q_2p_1=p_2q_1+q_1p_2=\lambda(1-a).\nonumber
\end{flalign}
It is a Hopf algebra with its coalgebra structure determined by
\begin{align*}
&\triangle(p_1)=p_1\otimes 1+c\otimes p_1,
\ \ \triangle(p_2)=p_2\otimes 1+ab\otimes p_2,\\
&\triangle(q_1)=q_1\otimes 1+b\otimes q_1,
\ \ \triangle(q_2)=q_2\otimes 1+ac\otimes q_2.
\end{align*}
\end{defi}

\begin{defi}\label{def 29}
Denote by $\mathfrak{U}_{29}(\lambda)$ the algebra that is generated by $a,\ b,\ c,\ d$, $p_1,\ p_2$, $q_1,\ q_2$, satisfying the relations $(\ref{3.1})-(\ref{3.2})$, $(\ref{e16.1})$, $(\ref{e16.2})$, $(\ref{e20.5})$ and
\begin{flalign*}
& & ap_1=-p_1a,\ bp_1=p_1b,\ cp_1=p_1c,\ dp_1=p_2d,\\
& & ap_2=-p_2a,\ bp_2=p_2b,\ cp_2=p_2c,\ dp_2=-p_1d,\\
& & p_1q_1+q_1p_1=0,\quad p_2q_2+q_2p_2=0,\\
& & p_1q_2+q_2p_1=\lambda(1-bc),\quad  p_2q_1+q_1p_2=-\lambda(1-bc).
\end{flalign*}
It is a Hopf algebra with its coalgebra structure determined by
\begin{align*}
&\triangle(p_1)=p_1\otimes 1+a\otimes p_1,
\quad \triangle(p_2)=p_2\otimes 1+abc\otimes p_2,\\
&\triangle(q_1)=q_1\otimes 1+a\otimes q_1,
\quad \triangle(q_2)=q_2\otimes 1+abc\otimes q_2.
\end{align*}
\end{defi}

\begin{pro}\label{14}
$(1)$ Suppose $A$ is a finite-dimensional Hopf algebra with the coradical  $H$ such
that its infinitesimal braiding is isomorphic to $\Omega_{i}$ for $i\in\{14,15,16,20,23,29\}$, then $A\cong \mathfrak{U}_{14}(I_{14})$ or
$\mathfrak{U}_{15}(I_{15})$ or $\mathfrak{U}_{j}(\lambda)$ for $j\in\{16,20,23,29\}$.

$(2)\ \mathfrak{U}_{14}(I_{14})\cong \mathfrak{U}_{14}(I_{14}^{'})$
if and only if  there exist nonzero parameters $z_1,\ z_2$,
$\beta_1,\ \beta_2$ such that
\begin{align}
\begin{split}
&z_1^2\lambda^{'}+2z_1z_2\alpha^{'}+z_2^2\mu^{'}=\lambda,
\hspace{2em}\beta_1^2\lambda^{'}+2\beta_1\beta_2\alpha^{'}+\beta_2^2\mu^{'}=\mu,\\
&z_1\beta_1\lambda^{'}+z_2\beta_2\mu^{'}+(z_1\beta_2+z_2\beta_1)\alpha^{'}=\alpha.
\label{Aut14.1}
\end{split}
\end{align}

$(3) ~\mathfrak{U}_{15}(I_{15})\cong \mathfrak{U}_{15}(I_{15}^{'})$
if and only if  there exist nonzero parameters $z$, $\beta$ such
that
\begin{align}
\begin{split}
&z^2\lambda^{'}=\lambda,\hspace{2em}\beta^2\mu^{'}=\mu,
\hspace{2em}z\beta\nu^{'}=\nu,\hspace{2em}z\beta\alpha^{'}=\alpha,\label{Aut15.1}
\end{split}
\end{align}
or
\begin{align}
\begin{split}
&z^2\mu^{'}=\lambda,\hspace{2em}\beta^2\lambda^{'}=\mu,
\hspace{2em}z\beta\nu^{'}=\nu,\hspace{2em}z\beta\alpha^{'}=\alpha,\label{Aut15.2}
\end{split}
\end{align}
or
\begin{align}
\begin{split}
&z^2\lambda^{'}=\lambda,\hspace{2em}\beta^2\mu^{'}=\mu,
\hspace{2em}z\beta\alpha^{'}=\nu,\hspace{2em}z\beta\nu^{'}=\alpha,\label{Aut15.3}
\end{split}
\end{align}
or
\begin{align}
\begin{split}
&z^2\mu^{'}=\lambda,\hspace{2em}\beta^2\lambda^{'}=\mu,
\hspace{2em}z\beta\alpha^{'}=\nu,\hspace{2em}z\beta\nu^{'}=\alpha.\label{Aut15.4}
\end{split}
\end{align}

\end{pro}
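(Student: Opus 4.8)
### Proof plan for Proposition~\ref{14}

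The plan is to follow the same template established in the proof of Proposition~\ref{1}, adapting it to each of the six infinitesimal braidings $\Omega_i$ with $i\in\{14,15,16,20,23,29\}$. Part $(1)$: first I would invoke \cite[Theorem 6.1]{Z} (valid here since $^{H^\ast}_{H^\ast}\mathcal{YD}\simeq{}^H_H\mathcal{YD}$) to conclude $\mathrm{gr}(A)\cong\mathcal{B}(\Omega_i)\sharp H$, so that the generators $p_1,p_2,q_1,q_2$ of the relevant definition can be lifted to skew-primitive elements of $A$ with the prescribed grouplikes, and the $H$-action relations (e.g.\ $(\ref{e2.4})$--$(\ref{e2.1})$, $(\ref{e14.1})$--$(\ref{e14.2})$, etc.) hold automatically because they are already satisfied in $\mathrm{gr}(A)$ and involve only grouplikes on the right-hand side. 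The substantive point is the quadratic relations: for each product like $q_1q_2+q_2q_1$ or $p_1q_2+q_2p_1$ one computes its coproduct and checks it is $(1,g)$-primitive for the appropriate grouplike $g\in\{1,bc,a,abc,\dots\}$; since $\mathcal{P}_{1,g}(A)=\mathcal{P}_{1,g}(H)$, which is either $0$ or $\mathbbm{k}\{1-g\}$ depending on whether $g$ is central-square-trivial in the relevant sense, one reads off that each such bracket equals the stated scalar multiple of $1-g$ (or $0$). This produces a surjective Hopf map $\mathfrak{U}_i(\cdot)\twoheadrightarrow A$. Then a PBW/Diamond-Lemma argument \cite{G} — the same as in Proposition~\ref{1}, with the monomial basis $\{p_1^{\epsilon_1}p_2^{\epsilon_2}q_1^{\epsilon_3}q_2^{\epsilon_4}a^eb^fc^gd^h\}$ — shows $\dim\mathfrak{U}_i = 256 = \dim(\mathcal{B}(\Omega_i)\sharp H) = \dim A$, forcing the surjection to be an isomorphism.

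Part $(2)$, the isomorphism classification of $\mathfrak{U}_{14}(I_{14})$: given a Hopf isomorphism $\Phi$, one first notes $\Phi|_H$ must be an automorphism of $H$ preserving the grouplikes $b,c$ (the $(1,b)$- and $(1,c)$-skew-primitive spaces), which from Table~1 restricts $\Phi|_H$ to a small list. Since $p_1,q_1$ are both $(1,b)$-skew-primitive and $p_2,q_2$ both $(1,c)$-skew-primitive, and the $a$- and $d$-actions pin down which summand they lie in, $\Phi(p_1)=z_1p_1'+z_2q_1'$, $\Phi(q_1)=\beta_1 p_1'+\beta_2 q_1'$ for invertible $2\times2$ data $(z_i),(\beta_i)$, with $p_2,q_2$ transforming compatibly via the $d$-action. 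Applying $\Phi$ to the three relations defining $p_1p_2{+}p_2p_1$, $q_1q_2{+}q_2q_1$, $p_iq_j{+}q_jp_i$ and comparing coefficients of $1-bc$ yields exactly the quadratic system $(\ref{Aut14.1})$; conversely any such data defines a Hopf isomorphism. Parts $(3)$ for $\mathfrak{U}_{15}$ and the analogous statements for $\mathfrak{U}_j(\lambda)$, $j\in\{16,20,23,29\}$, run identically, the only difference being that for $\mathfrak{U}_{15}$ the grouplikes attached to $q_1,q_2$ are $ab,ac$ (distinct from those of $p_1,p_2$), so $\Phi$ cannot mix $p$'s with $q$'s except when some $\tau$ swaps the pair $\{b,c\}$ with $\{ab,ac\}$ — this produces the four cases $(\ref{Aut15.1})$--$(\ref{Aut15.4})$ according to whether $\Phi|_H$ fixes or swaps $b\leftrightarrow c$ and whether it swaps the two skew-primitive pairs.

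The main obstacle I anticipate is bookkeeping rather than conceptual: one must correctly enumerate, for each of the six algebras, exactly which automorphisms $\tau_k$ from Table~1 are compatible with the grouplike-degrees of $p_1,p_2,q_1,q_2$, and then track how each such $\tau_k$ permutes and rescales the generators — the $d$-action relations (which involve $\xi$ and sometimes send $p_i\mapsto p_{3-i}d$ rather than $p_id$) are the delicate part, since an incompatible sign or factor of $\xi$ there is what rules most $\tau_k$ out. A secondary subtlety is verifying in Part $(1)$ that the relevant $(1,g)$-primitive space of $H$ is genuinely $1$-dimensional (spanned by $1-g$) for each grouplike $g$ occurring — this uses $G(H)=\langle a\rangle\times\langle b\rangle\times\langle c\rangle$ together with the fact that $\mathcal{P}_{1,g}(H)=0$ unless $g$ is grouplike with the appropriate order, which I would check case by case but expect to be routine.
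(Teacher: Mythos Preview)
Your proposal is correct and follows essentially the same approach as the paper: invoke \cite[Theorem~6.1]{Z} to get $\mathrm{gr}(A)\cong\mathcal{B}(\Omega_i)\sharp H$, compute coproducts of the quadratic expressions to identify them as $(1,g)$-primitives lying in $\mathbbm{k}(1-g)$, apply the Diamond Lemma for the dimension count, and for the isomorphism classification restrict $\Phi|_H$ to the admissible $\tau_k$ from Table~1 and track how the skew-primitive generators transform. The one detail you leave implicit but the paper spells out in Part~(1) is that the $d$-relation $d(p_1q_2+q_2p_1)=(p_2q_1+q_1p_2)d$ is what forces the two a priori independent $(1,bc)$-primitive coefficients to coincide as a single $\alpha$ in $(\ref{e14.4})$.
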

\begin{proof} $(1)$ We prove the claim for $\Omega_{14}$. The proofs for $\Omega_{15}$, $\Omega_{16}$, $\Omega_{20}$, $\Omega_{23}$, $\Omega_{29}$ follow the same lines. By {\rm\cite [Theorem 6.1]{Z}}, we have that gr$(A)\cong \mathcal{B}(\Omega_{14})\sharp H$. Let $M_1=\mathbbm{k}\{p_1,p_2\}=\mathbbm{k}\{q_1,q_2\}$. By Proposition \ref{1}, we only need to prove that $(\ref{e14.4})$ holds in $A$. After a direct computation, we have that
\begin{flalign*}
&\triangle(p_1q_1+q_1p_1)=(p_1q_1+q_1p_1)\otimes 1+1\otimes(p_1q_1+q_1p_1),\\
&\triangle(p_2q_2+q_2p_2)=(p_2q_2+q_2p_2)\otimes 1+1\otimes(p_2q_2+q_2p_2),\\
&\triangle(p_1q_2+q_2p_1)=(p_1q_2+q_2p_1)\otimes1
+bc\otimes(p_1q_2+q_2p_1),\\
&\triangle(p_2q_1+q_1p_2)=(p_2q_1+q_1p_2)\otimes1
+bc\otimes(p_2q_1+q_1p_2).
\end{flalign*}
Since $d(p_1q_2+q_2p_1)=(p_2q_1+q_1p_2)d$, the relation $(\ref{e14.4})$ holds in $A$. Then there is a surjective Hopf morphism from $\mathfrak{U}_{14}(I_{14})$ to $A$. We can observe that all elements of $\mathfrak{U}_{14}(I_{14})$ can be expressed by linear
combinations of
\begin{equation*}
\{p_1^ip_2^jq_1^kq_2^l a^eb^fc^gd^h\mid
i,\ j,\ k,\ l,\ e,\ f,\ g,\ h\in\mathbb{I}_{0,1}\}.
\end{equation*}
According to the Diamond Lemma, the set is a basis of
$\mathfrak{U}_{14}(I_{14})$. Then $\dim A$ = $\dim \mathfrak{U}_{14}(I_{14})$,
whence $A\cong \mathfrak{U}_{14}(I_{14})$.

$(2)$ Suppose $\Phi : \mathfrak{U}_{14}(I_{14})\rightarrow \mathfrak{U}_{14}(I_{14}^{'})$ is an isomorphism of
Hopf algebras. Similar to the proof of Proposition \ref{1}, $\Phi|_H\in \{\tau_1,\ldots,\tau_{8},\tau_{17},\ldots,\tau_{24}\}$
and there exist nonzero parameters $z_1,~z_2$, $\beta_1,~\beta_2$
such that
when $\Phi|_H\in\{\tau_1,\ldots,\tau_{4},\tau_{17},\ldots,\tau_{20}\}$,
\begin{equation*}
\Phi(p_1)=z_1p_1^{'}+z_2q_1^{'},\ \Phi(p_2)=z_1p_2^{'}+z_2q_2^{'}, \ \Phi(q_1)=\beta_1p_1^{'}+\beta_2q_1^{'}, \ \Phi(q_2)=\beta_1p_2^{'}+\beta_2q_2^{'};
\end{equation*}
when $\Phi|_H\in\{\tau_5,\ldots,\tau_{8},\tau_{21},\ldots,\tau_{24}\}$,
\begin{equation*}
\Phi(p_1)=z_1p_2^{'}+z_2q_2^{'},\ \Phi(p_2)=z_1p_1^{'}+z_2q_1^{'}, \ \Phi(q_1)=\beta_1p_2^{'}+\beta_2q_2^{'}, \ \Phi(q_2)=\beta_1p_1^{'}+\beta_2q_1^{'},
\end{equation*}
then the relation $(\ref{Aut14.1})$ holds.

The proof of $(3)$ is completely analogous.
\end{proof}

\begin{pro}\label{19}
Suppose $A$ is a finite-dimensional Hopf algebra with the coradical $H$ such
that its infinitesimal braiding $V$ is isomorphic to $\Omega_i$ for $i\in\{4,5,19,21,22,30\}$, then $A\cong \mathcal{B}(V)\sharp H$.

\end{pro}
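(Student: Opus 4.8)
The plan is to imitate the arguments of Propositions \ref{1} and \ref{14}, the only genuinely new point being that for each of the six modules $V\in\{\Omega_4,\Omega_5,\Omega_{19},\Omega_{21},\Omega_{22},\Omega_{30}\}$ every potential lifting parameter is forced to vanish. First, by {\rm\cite [Theorem 6.1]{Z}} one has $\mathrm{gr}\,A\cong\mathcal B(V)\sharp H$; in particular $\dim A=\dim \mathcal B(V)\sharp H$, and $A$ is generated by $H$ together with a choice of skew-primitive lifts of a basis of $V=\mathcal B(V)(1)$. Exactly as in the proof of Proposition \ref{1}, all the relations describing the $H$-module structure on these generators (the action of $a,b,c,d$, i.e.\ the analogues of $(\ref{A})$--$(\ref{H})$, $(\ref{e2.4})$--$(\ref{e2.5})$ and the $d$-relations attached to the $M_j$'s) already hold in $A$: the lift of a generator of type $V_j$ (resp.\ of $M_j$) is skew-primitive with a prescribed group-like, and the way a skew-primitive element is permuted by $a,b,c,d$ is rigid, so it must coincide with the one read off from the bosonization $\mathcal B(V)\sharp H$.

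The heart of the proof is to show that the quadratic relations of $\mathcal B(V)\sharp H$ (the analogues of $(\ref{Ai})$--$(\ref{Hr})$, the cross relations, and the two- and three-term relations inside and between the copies of the $M_j$'s) also hold in $A$. For each such relation $r=0$ of $\mathcal B(V)$ one computes $\triangle(r)$ in $A$ from the coproducts of the generators; the ``middle'' terms cancel by the same computation as in Propositions \ref{1} and \ref{14}, so $r\in\mathcal P_{1,g_r}(A)$, where $g_r$ is the product of the group-likes attached to the factors of $r$. Since $\mathrm{gr}\,A=\mathcal B(V)\sharp H$ and, for these six $V$, no simple summand of $V$ has its coaction given by $g_r$, one gets $\mathcal P_{1,g_r}(A)=\mathbbm{k}(1-g_r)$, hence $r=\lambda_r(1-g_r)$; when $g_r=1$ this already yields $r=0$. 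For the remaining coefficients $\lambda_r$ one uses the $d$-action: applying $d$ to $r=\lambda_r(1-g_r)$ and rewriting by means of the $d$-relations on the generators expresses $d\cdot r$ through a companion relation $r'$, and comparing with $d\cdot(\lambda_r(1-g_r))$ forces $\lambda_r=0$ --- either because a sign appears (giving $2\lambda_r=0$, as happens when the relevant Nichols subalgebra is a braided exterior algebra on which $[d]^2=-I$, or is a quantum-polynomial algebra so that $r$ is a commutator), or because conjugation by $d$ (which interchanges $b$ and $c$ and fixes $a$) sends $g_r$ to a group-like $g_r'$ for which the $(1-g_r')$-term cannot be matched by any available relation, so that the relevant supports in $\mathbbm{k}\,G(H)$ are disjoint. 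A preliminary normalising change of variables among the generators sharing a common group-like may be needed before this step; after it, every parameter is $0$, and all defining relations of $\mathcal B(V)\sharp H$ hold in $A$.

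Consequently there is a surjection of Hopf algebras $\mathcal B(V)\sharp H\twoheadrightarrow A$. As in the previous propositions, the Diamond Lemma \cite{G} produces a basis of $\mathcal B(V)\sharp H$ consisting of the monomials in the generators of $\mathcal B(V)$ (with exponents bounded by the braided-exterior / quantum-plane relations) times $a^e b^f c^g d^h$ with $e,f,g,h\in\mathbb I_{0,1}$; hence $\dim \mathcal B(V)\sharp H=\dim A$ and the surjection is an isomorphism.

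The one delicate point --- and the main obstacle --- is the vanishing of all the $\lambda_r$ in the second paragraph: it is a relation-by-relation verification, and it is precisely here that these six $V$ behave differently from the modules carrying a nontrivial lifting in Propositions \ref{1}--\ref{14}, and from the $H_{b:1}$-situation of \cite{Z}. The bookkeeping of which group-like sits on each generator, of the braiding type ($\pm 1$) of the Nichols subalgebras involved, and of how $d$ permutes $b$ and $c$, is essentially the whole content of the proof.
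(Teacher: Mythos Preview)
Your overall strategy is correct and parallels the paper's: establish $\mathrm{gr}\,A\cong\mathcal B(V)\sharp H$, lift the $H$-action relations for free, and show that each quadratic Nichols-algebra relation $r$ satisfies $r\in\mathcal P_{1,g_r}(A)=\mathbbm k(1-g_r)$, then kill the resulting scalar. The gap is in the last step. You claim the vanishing of every $\lambda_r$ comes from conjugation by $d$, but this fails already for the internal relation $p_1p_2+p_2p_1$ with $p_1,p_2\in M_3$ (which occurs in $\Omega_4$, $\Omega_5$, $\Omega_{19}$, $\Omega_{21}$, $\Omega_{22}$). Here $g_r=(bc)(abc)=a$, and since $dp_1=p_2ad$, $dp_2=p_1ad$, one computes $d(p_1p_2+p_2p_1)=(p_1p_2+p_2p_1)d$ and $d(1-a)=(1-a)d$; conjugation by $d$ fixes both sides and yields no constraint. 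Neither of your two stated mechanisms applies: $[d]^2=I$ on $M_3$, and $d$ fixes $g_r=a$. The same obstruction arises for the internal $M_9$ relation in $\Omega_{21}$.

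The paper's device is simpler and works here: pick a group-like $h\in\{a,b,c\}$ that \emph{anticommutes} with $r$. Since $G(H)=\langle a,b,c\rangle$ is abelian, $h$ automatically commutes with $1-g_r$; hence $\lambda_r(1-g_r)=r=-h^{-1}rh=-\lambda_r(1-g_r)$ forces $\lambda_r=0$. For the cross relation $p_1A_i-A_ip_1\in\mathcal P_{1,ac}$ in $\Omega_4$ the paper uses $h=a$ (since $ap_1=p_1a$, $aA_i=-A_ia$); for $p_1p_2+p_2p_1$ in $M_3$ one takes $h=b$ (since $bp_1=p_1b$, $bp_2=-p_2b$). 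So the missing idea is not ``use $d$'' but ``use a group-like whose sign on the two factors of $r$ differs''; your $d$-argument is a useful supplement (and is what one actually needs for the internal $M_7,M_8$ relations in $\Omega_{30}$, where $a,b,c$ all commute with $p_1p_2+p_2p_1$), but it cannot replace the abelian group-like trick across all six cases.
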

\begin{proof} We prove the claim for $\Omega_4(n_1,n_2,n_3,n_4)$. The proofs for others follow the same lines. Let $M_3=\mathbbm{k}\{p_1,p_2\}$. By {\rm\cite [Theorem 6.1]{Z}}, we have that gr$(A)\cong \mathcal{B}(\Omega_4(n_1,n_2,n_3,n_4))\sharp H$. Recall that $\mathcal{B}(\Omega_4(n_1,n_2,n_3,n_4))\sharp H$ is the algebra generated by \begin{equation*}
\{A_i\},\ \{B_j\},\ \{E_m\},\ \{F_q\},\ p_1,\ p_2,\ a,\ b,\ c,\ d
\end{equation*}
with
$\{A_i\}$, $\{B_j\}$, $\{E_m\}$, $\{F_q\}$, $p_1,\ p_2$, satisfying the relations of $\mathcal{B}(\Omega_4(n_1,n_2,n_3,$
$n_4))$, $a,\ b,\ c,\ d$, satisfying the relations of $H$, and all together, satisfying the relations that give the commutativity: (\ref{e20.1}), (\ref{e20.2}).
%
By Propositions $\ref{1}$ and $\ref{14}$, we know that the relations
\begin{flalign*}
&A_iB_j+B_jA_i=0, \ A_iE_m-E_mA_i=0,\  A_iF_q-F_qA_i=0, \ B_jE_m-E_mB_j=0,\\
&B_jF_q-F_qB_j=0,\  E_mF_q+F_qE_m=0, \ p_1^2=0, ~~p_2^2=0, \ p_1p_2+p_2p_1=0
\end{flalign*}
hold in $A$.
As
\begin{equation*}
\triangle(p_1)=p_1\otimes 1+bc\otimes p_1,
\ \triangle(p_2)=p_2\otimes 1+abc\otimes p_2,
\ \triangle(A_i)=A_i\otimes 1+ab\otimes A_i,
\end{equation*}
we have that
\begin{equation*}
\triangle(p_1A_i-A_ip_1)=(p_1A_i-A_ip_1)\otimes 1+ac\otimes(p_1A_i-A_ip_1).
\end{equation*}
Since $a(p_1A_i-A_ip_1)=-(p_1A_i-A_ip_1)a$,
then $p_1A_i-A_ip_1=0$ hold in $A$.
Similarly,
\begin{flalign*}
&p_1B_j-B_jp_1=0,\ p_1E_m-E_mp_1=0, \ p_1F_q-F_qp_1=0,\ p_2A_i+A_ip_2=0,\\ &p_2B_j+B_jp_2=0,\ p_2E_m+E_mp_2=0, \ p_2F_q+F_qp_2=0
\end{flalign*}
hold in $A$.
Therefore, $A\cong gr(A)$.
\end{proof}

\begin{defi}\label{def 38}
For a set of parameters $I_{38}=\{\lambda,~\mu,~\alpha \}$, denote by $\mathfrak{U}_{38}(I_{38})$ the algebra that is generated by $a,\ b,\ c,\ d$, $p_1,\ p_2$, $q_1,\ q_2$, satisfying the relations $(\ref{3.1})-(\ref{3.2})$ and
\begin{flalign}
& & ap_1=p_1a,\hspace{1em}bp_1=-p_1b,\hspace{1em}cp_1=-p_1c,\hspace{1em}dp_1=p_1d,\label{e38.1}\\
& & ap_2=p_2a,\hspace{1em}bp_2=p_2b,\hspace{1em}cp_2=p_2c,\hspace{1em}dp_2=-p_2d,\label{e38.2}\\
& & aq_1=q_1a,\hspace{1em}bq_1=-q_1b,\hspace{1em}cq_1=-q_1c,\hspace{1em}dq_1=q_1d,\label{e38.3}\\
& & aq_2=q_2a,\hspace{1em}bq_2=q_2b,\hspace{1em}cq_2=q_2c,\hspace{1em}dq_2=-q_2d,\label{e38.4}\\
& & p_1^2=\lambda(abc+a-2),\hspace{1em}p_2^2=\lambda(abc-a),\hspace{1em}p_1p_2+p_2p_1=0,\label{e38.5}\\
& & q_1^2=\mu(abc+a-2),\hspace{1em}q_2^2=\mu(abc-a),\hspace{1em}q_1q_2+q_2q_1=0,\label{e38.6}\\
& & p_1q_1+q_1p_1=\alpha(abc+a-2),\hspace{1em}p_2q_2+q_2p_2=\alpha(abc-a),\label{e38.7}\\
& & p_1q_2+q_2p_1=0,\hspace{1em}p_2q_1+q_1p_2=0.&\label{e38.8}
\end{flalign}
It is a Hopf algebra with its coalgebra structure determined by
\begin{align*}
&\triangle(p_1)=p_1\otimes 1+\frac{1}{2}a(b+c)d\otimes p_1+\frac{1}{2}a(b-c)d\otimes p_2,\\
&\triangle(p_2)=p_2\otimes 1+\frac{1}{2}(b+c)d\otimes p_2+\frac{1}{2}(b-c)d\otimes p_1,\\
&\triangle(q_1)=q_1\otimes 1+\frac{1}{2}a(b+c)d\otimes q_1+\frac{1}{2}a(b-c)d\otimes q_2,\\
&\triangle(q_2)=q_2\otimes 1+\frac{1}{2}(b+c)d\otimes q_2+\frac{1}{2}(b-c)d\otimes q_1.
\end{align*}
\end{defi}

\begin{defi}\label{def 39}
For a set of parameters $I_{39}=\{\lambda,\mu,\alpha \}$, denote by $\mathfrak{U}_{39}(I_{39})$ the algebra that is generated by $a,\ b,\ c,\ d$, $p_1,\ p_2$, $q_1,\ q_2$, satisfying the relations $(\ref{3.1})-(\ref{3.2})$, $(\ref{e38.1})-(\ref{e38.6})$ and
\begin{flalign}
& & p_1q_1+q_1p_1=p_2q_2+q_2p_2=\alpha(1-bc),
\ \ p_1q_2+q_2p_1=0,\ \ p_2q_1+q_1p_2=0.
\end{flalign}
It is a Hopf algebra with its coalgebra structure determined by
\begin{align*}
&\triangle(p_1)=p_1\otimes 1+\frac{1}{2}a(b+c)d\otimes p_1+\frac{1}{2}a(b-c)d\otimes p_2,\\
&\triangle(p_2)=p_2\otimes 1+\frac{1}{2}(b+c)d\otimes p_2+\frac{1}{2}(b-c)d\otimes p_1,\\
&\triangle(q_1)=q_1\otimes 1+\frac{1}{2}(b+c)d\otimes q_1+\frac{1}{2}(b-c)d\otimes q_2,\\
&\triangle(q_2)=q_2\otimes 1+\frac{1}{2}a(b+c)d\otimes q_2+\frac{1}{2}a(b-c)d\otimes q_1.
\end{align*}
\end{defi}

\begin{defi}\label{def 41}
For a set of parameters $I_{41}=\{\lambda,~\mu,~\alpha \}$, denote by $\mathfrak{U}_{41}(I_{41})$ the algebra that is generated by $a,\ b,\ c,\ d$, $p_1,\ p_2$, $q_1,\ q_2$, satisfying the relations $(\ref{3.1})-(\ref{3.2})$ and
\begin{flalign}
& & ap_1=p_1a,\hspace{1em}bp_1=p_1b,\hspace{1em}cp_1=p_1c,\hspace{1em}dp_1=-p_1d,\label{e41.1}\\
& & ap_2=p_2a,\hspace{1em}bp_2=-p_2b,\hspace{1em}cp_2=-p_2c,\hspace{1em}dp_2=-p_2d,\label{e41.2}\\
& & aq_1=q_1a,\hspace{1em}bq_1=q_1b,\hspace{1em}cq_1=q_1c,\hspace{1em}dq_1=-q_1d,\label{e41.3}\\
& & aq_2=q_2a,\hspace{1em}bq_2=-q_2b,\hspace{1em}cq_2=-q_2c,\hspace{1em}dq_2=-q_2d,\label{e41.4}\\
& & p_1^2=\lambda(abc+a-2),\hspace{1em}p_2^2=\lambda(a-abc),\hspace{1em}p_1p_2+p_2p_1=0,\label{e41.5}\\
& & q_1^2=\mu(abc+a-2),\hspace{1em}q_2^2=\mu(a-abc),\hspace{1em}q_1q_2+q_2q_1=0,\label{e41.6}\\
& & p_1q_1+q_1p_1=\alpha(abc+a-2),\hspace{1em}p_2q_2+q_2p_2=\alpha(a-abc),&\\
& & p_1q_2+q_2p_1=0,\hspace{1em}p_2q_1+q_1p_2=0.&
\end{flalign}
It is a Hopf algebra with its coalgebra structure determined by
\begin{align*}
&\triangle(p_1)=p_1\otimes 1+\frac{1}{2}a(1+bc)d\otimes p_1+\frac{1}{2}a(1-bc)d\otimes p_2,\\
&\triangle(p_2)=p_2\otimes 1+\frac{1}{2}(1+bc)d\otimes p_2+\frac{1}{2}(1-bc)d\otimes p_1,\\
&\triangle(q_1)=q_1\otimes 1+\frac{1}{2}a(1+bc)d\otimes q_1+\frac{1}{2}a(1-bc)d\otimes q_2,\\
&\triangle(q_2)=q_2\otimes 1+\frac{1}{2}(1+bc)d\otimes q_2+\frac{1}{2}(1-bc)d\otimes q_1.
\end{align*}
\end{defi}

\begin{defi}\label{def 42}
For a set of parameters $I_{42}=\{\lambda,~\mu,~\alpha \}$, denote by $\mathfrak{U}_{42}(I_{42})$ the algebra that is generated by $a,\ b,\ c,\ d$, $p_1,\ p_2$, $q_1,\ q_2$, satisfying the relations $(\ref{3.1})-(\ref{3.2})$, $(\ref{e41.1})-(\ref{e41.6})$ and
\begin{flalign*}
& & p_1q_1+q_1p_1=\alpha(1-bc),\ p_2q_2+q_2p_2=-\alpha(1-bc),
\ p_1q_2+q_2p_1=p_2q_1+q_1p_2=0.
\end{flalign*}
It is a Hopf algebra with its coalgebra structure determined by
\begin{align*}
&\triangle(p_1)=p_1\otimes 1+\frac{1}{2}a(1+bc)d\otimes p_1+\frac{1}{2}a(1-bc)d\otimes p_2,\\
&\triangle(p_2)=p_2\otimes 1+\frac{1}{2}(1+bc)d\otimes p_2+\frac{1}{2}(1-bc)d\otimes p_1,\\
&\triangle(q_1)=q_1\otimes 1+\frac{1}{2}(1+bc)d\otimes q_1+\frac{1}{2}(1-bc)d\otimes q_2,\\
&\triangle(q_2)=q_2\otimes 1+\frac{1}{2}a(1+bc)d\otimes q_2+\frac{1}{2}a(1-bc)d\otimes q_1.
\end{align*}
\end{defi}

\begin{defi}\label{def 44}
For a set of parameters $I_{44}=\{\lambda,~\mu,~\alpha \}$, denote by $\mathfrak{U}_{44}(I_{44})$ the algebra that is generated by $a,\ b,\ c,\ d$, $p_1,\ p_2$, $q_1,\ q_2$, satisfying the relations $(\ref{3.1})-(\ref{3.2})$ and
\begin{flalign}
& & ap_1=p_1a,\hspace{1em}bp_1=p_1b,\hspace{1em}cp_1=-p_1c,\hspace{1em}dp_1=-p_2ad,\label{e44.1}\\
& & ap_2=p_2a,\hspace{1em}bp_2=-p_2b,\hspace{1em}cp_2=p_2c,\hspace{1em}dp_2=-p_1ad,\label{e44.2}\\
& & aq_1=q_1a,\hspace{1em}bq_1=q_1b,\hspace{1em}cq_1=-q_1c,\hspace{1em}dq_1=-q_2ad,\label{e44.3}\\
& & aq_2=q_2a,\hspace{1em}bq_2=-q_2b,\hspace{1em}cq_2=q_2c,\hspace{1em}dq_2=-q_1ad,\label{e44.4}\\
& & p_1p_2=0,\hspace{2em}p_2p_1=0,\hspace{2em}p_1^2+p_2^2=\lambda(1-a),\label{e44.5}\\
& & q_1q_2=0,\hspace{2em}q_2q_1=0,\hspace{2em}q_1^2+q_2^2=\mu(1-a),\label{e44.6}\\
& & p_1q_1+q_1p_1+p_2q_2+q_2p_2=\alpha(1-a),\quad \quad\quad p_1q_1-q_1p_1-p_2q_2+q_2p_2=0,\label{e44.7}\\
& & p_1q_2+q_2p_1+p_2q_1+q_1p_2=0,\quad \quad\quad p_1q_2-q_2p_1-p_2q_1+q_1p_2=0.\label{e44.8}
\end{flalign}
It is a Hopf algebra with the same coalgebra structure as $\mathfrak{U}_{41}(I_{41})$.
\end{defi}

\begin{defi}\label{def 45}
For a set of parameters $I_{45}=\{\lambda,~\mu\}$, denote by $\mathfrak{U}_{45}(I_{45})$ the algebra that is generated by $a,\ b,\ c,\ d$, $p_1,\ p_2$, $q_1,\ q_2$, satisfying the relations $(\ref{3.1})-(\ref{3.2})$, $(\ref{e44.1})-(\ref{e44.6})$ and
\begin{flalign}
& & p_1q_1+q_1p_1+p_2q_2+q_2p_2=0,\hspace{1em}p_1q_1-q_1p_1-p_2q_2+q_2p_2=0,\\
& & p_1q_2+q_2p_1+p_2q_1+q_1p_2=0,\hspace{1em}p_1q_2-q_2p_1-p_2q_1+q_1p_2=0.
\end{flalign}
It is a Hopf algebra with the same coalgebra structure as $\mathfrak{U}_{42}(I_{42})$.
\end{defi}

\begin{pro}\label{38}
$(1)$ Suppose $A$ is a finite-dimensional Hopf algebra with the coradical  $H$ such
that its infinitesimal braiding is isomorphic to $\Omega_{i}$ for $i\in\{38,39,41,42,44,45\}$, then $A\cong \mathfrak{U}_{i}(I_i)$.

$(2)\ \mathfrak{U}_{38}(I_{38})\cong \mathfrak{U}_{38}(I_{38}^{'})$,
$\mathfrak{U}_{41}(I_{41})\cong \mathfrak{U}_{41}(I_{41}^{'})$,
$\mathfrak{U}_{44}(I_{44})\cong \mathfrak{U}_{44}(I_{44}^{'})$
if and only if  there exist nonzero parameters $z_1,\ z_2$,
$\beta_1,\ \beta_2$ such that
\begin{align}
\begin{split}
&z_1^2\lambda^{'}+z_1z_2\alpha^{'}+z_2^2\mu^{'}=\lambda,
\hspace{2em}\beta_1^2\lambda^{'}+\beta_1\beta_2\alpha^{'}+\beta_2^2\mu^{'}=\mu,\\
&2z_1\beta_1\lambda^{'}+2z_2\beta_2\mu^{'}+(z_1\beta_2+z_2\beta_1)\alpha^{'}=\alpha.
\label{Aut38}
\end{split}
\end{align}

$(3)\ \mathfrak{U}_{39}(I_{39})\cong \mathfrak{U}_{39}(I_{39}^{'})$,
$\mathfrak{U}_{42}(I_{42})\cong \mathfrak{U}_{42}(I_{42}^{'})$ if
and only if  there exist nonzero parameters $z$, $\beta$ such that
\begin{align}
\begin{split}
&z^2\lambda^{'}=\lambda,\hspace{2em}\beta^2\mu^{'}=\mu,
\hspace{2em}z\beta\alpha^{'}=\alpha,\label{Aut39.1}
\end{split}
\end{align}
or
\begin{align}
\begin{split}
&z^2\mu^{'}=\lambda,\hspace{2em}\beta^2\lambda^{'}=\mu,
\hspace{2em}z\beta\alpha^{'}=\alpha.\label{Aut39.2}
\end{split}
\end{align}

$(4)$  $\mathfrak{U}_{45}(\lambda,\mu)\cong \mathfrak{U}_{45}(1,1)$ for $\lambda,\ \mu\neq 0$.
\end{pro}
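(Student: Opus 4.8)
The plan is to produce an explicit rescaling isomorphism, exactly in the spirit of parts $(2)$ and $(3)$ of this proposition. Since $\mathbbm{k}$ is algebraically closed and $\lambda,\mu\neq 0$, fix $z,\beta\in\mathbbm{k}^{\times}$ with $z^{2}=\lambda$ and $\beta^{2}=\mu$, and define a map $\Phi$ on the generators of $\mathfrak{U}_{45}(\lambda,\mu)$ by $\Phi(x)=x$ for $x\in\{a,b,c,d\}$, $\Phi(p_i)=z\,p_i$, $\Phi(q_i)=\beta\,q_i$ for $i=1,2$, with target $\mathfrak{U}_{45}(1,1)$ (which is a Hopf algebra by Definition $\ref{def 45}$). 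The isomorphism will be this $\Phi$.

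First I would check that $\Phi$ respects every defining relation of $\mathfrak{U}_{45}(\lambda,\mu)$, hence extends to an algebra map. The relations $(\ref{3.1})$--$(\ref{3.2})$ involve only $a,b,c,d$ and are untouched. The relations $(\ref{e44.1})$--$(\ref{e44.4})$ are linear in $p_1,p_2$ (resp.\ $q_1,q_2$), so under $\Phi$ they are merely multiplied by $z$ (resp.\ $\beta$) and hold in the target. Among $(\ref{e44.5})$--$(\ref{e44.6})$, the relations $p_1p_2=p_2p_1=0$, $q_1q_2=q_2q_1=0$ map to scalar multiples of themselves; the two relations with nonzero right-hand side become
\begin{equation*}
z^{2}(p_1^{2}+p_2^{2})=z^{2}(1-a)=\lambda(1-a),\qquad \beta^{2}(q_1^{2}+q_2^{2})=\beta^{2}(1-a)=\mu(1-a),
\end{equation*}
which are exactly the corresponding relations of $\mathfrak{U}_{45}(1,1)$ because $z^{2}=\lambda$ and $\beta^{2}=\mu$. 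The four mixed relations of Definition $\ref{def 45}$ are of bidegree $(1,1)$ in the $p$'s and $q$'s with right-hand side $0$, so they are multiplied by $z\beta$ and survive. For the coalgebra structure (imported from $\mathfrak{U}_{42}$), each of $\triangle(p_i),\triangle(q_i)$ is homogeneous of degree one in $p_1,p_2$ (resp.\ $q_1,q_2$), with one tensor leg in $H$ and the other a single $p_i$ or $q_i$; since $\Phi$ fixes $a,b,c,d$, we get $(\Phi\otimes\Phi)\triangle(p_i)=z\,\triangle(p_i)=\triangle(\Phi(p_i))$ and likewise for $q_i$, while compatibility with $\varepsilon$ is clear. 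Thus $\Phi$ is a bialgebra, hence Hopf algebra, morphism.

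Finally I would argue bijectivity. The map $\Psi\colon\mathfrak{U}_{45}(1,1)\to\mathfrak{U}_{45}(\lambda,\mu)$ given by the same recipe with $z^{-1},\beta^{-1}$ in place of $z,\beta$ is, by the identical verification, a Hopf algebra morphism, and $\Psi\Phi$, $\Phi\Psi$ fix all generators, hence equal the identity. (Equivalently, the Diamond-Lemma basis $\{p_1^{i}p_2^{j}q_1^{k}q_2^{\ell}a^{e}b^{f}c^{g}d^{h}\mid i,j,k,\ell,e,f,g,h\in\mathbb{I}_{0,1}\}$ used for the analogous Hopf algebras gives $\dim\mathfrak{U}_{45}(\lambda,\mu)=\dim\mathfrak{U}_{45}(1,1)=256$, so the surjective $\Phi$ is an isomorphism.) The only genuinely delicate point is the relation bookkeeping in the second step: one must confirm that the \emph{only} defining relations of $\mathfrak{U}_{45}$ with a nonzero right-hand side are the two square-sum relations in $(\ref{e44.5})$ and $(\ref{e44.6})$ — in particular that none of the mixed $p$-$q$ relations of Definition $\ref{def 45}$ carries a nonzero constant — because those two relations are precisely what pin down $z$ and $\beta$, and it is exactly their consistency under the rescaling that both requires and is guaranteed by $\lambda,\mu\neq 0$.
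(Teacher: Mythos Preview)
Your argument is correct and is essentially the same as the paper's: the paper also proves part~(4) by the rescaling $p_i\mapsto\sqrt{\lambda}\,p_i$, $q_i\mapsto\sqrt{\mu}\,q_i$ with $\Phi|_H=\mathrm{id}$, only stating it in one line and with the map written in the direction $\mathfrak{U}_{45}(1,1)\to\mathfrak{U}_{45}(\lambda,\mu)$. Your version is the inverse of that map and you supply the verifications (relations, coalgebra compatibility, bijectivity) that the paper leaves implicit, but the underlying idea is identical.
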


\begin{proof} $(1)$ We prove the claim for $\Omega_{38}$. The proofs for $\Omega_{39}$, $\Omega_{41}$, $\Omega_{42}$, $\Omega_{44}$, $\Omega_{45}$ follow the same lines. By {\rm\cite [Theorem 6.1]{Z}}, we have that gr$(A)\cong \mathcal{B}(\Omega_{38})\sharp H$. Let $M_{13}=\mathbbm{k}\{p_1,p_2\}=\mathbbm{k}\{q_1,q_2\}$.
Similar to Proposition \ref{14}, we only need to prove that  $(\ref{e38.5})-(\ref{e38.8})$ in Definition \ref{def 38} hold in $A$. After a direct computation, we have that
\begin{flalign*}
&\triangle(p_1^2+p_2^2)=(p_1^2+p_2^2)\otimes 1+abc\otimes (p_1^2+p_2^2),\\
&\triangle(p_1^2-p_2^2)=(p_1^2-p_2^2)\otimes 1+a\otimes (p_1^2-p_2^2),\\
&\triangle(p_1p_2+p_2p_1)=(p_1p_2+p_2p_1)\otimes 1+bc\otimes (p_1p_2+p_2p_1),\\
&\triangle(p_1q_1+q_1p_1)=(p_1q_1+q_1p_1)\otimes 1+\frac{1}{2}a(1+bc)\otimes (p_1q_1+q_1p_1)\\
&\hspace{8em}+\frac{1}{2}a(bc-1)\otimes (p_2q_2+q_2p_2),\\
&\triangle(p_2q_2+q_2p_2)=(p_2q_2+q_2p_2)\otimes 1+\frac{1}{2}a(1+bc)\otimes (p_2q_2+q_2p_2)\\
&\hspace{8em}+\frac{1}{2}a(bc-1)\otimes (p_1q_1+q_1p_1),\\
&\triangle(p_1q_2+q_2p_1)=(p_1q_2+q_2p_1)\otimes 1+\frac{1}{2}(1+bc)\otimes (p_1q_2+q_2p_1)\\
&\hspace{8em}+\frac{1}{2}(bc-1)\otimes (p_2q_1+q_1p_2),\\
&\triangle(p_2q_1+q_1p_2)=(p_2q_1+q_1p_2)\otimes 1+\frac{1}{2}(1+bc)\otimes (p_2q_1+q_1p_2)\\
&\hspace{8em}+\frac{1}{2}(bc-1)\otimes (p_1q_2+q_2p_1).
\end{flalign*}
As $b(p_1p_2+p_2p_1)=-(p_1p_2+p_2p_1)b$, the relations $(\ref{e38.5}),(\ref{e38.6})$ hold in $A$. Since
\begin{equation*}
b(p_1q_2+q_2p_1)=-(p_1q_2+q_2p_1)b,\hspace{2em}b(p_2q_1+q_1p_2)=-(p_2q_1+q_1p_2)b
\end{equation*}
the relations $(\ref{e38.7})$, $(\ref{e38.8})$ hold in $A$.
Then there is a surjective Hopf morphism from
$\mathfrak{U}_{38}(I_{38})$ to $A$. We can observe that all elements of $\mathfrak{U}_{38}(I_{38})$ can be expressed by linear
combinations of
\begin{equation*}
\{p_1^ip_2^jq_1^kq_2^l a^eb^fc^gd^h\mid
 i,j,k,l,e,f,g,h\in\mathbb{I}_{0,1}\}.
\end{equation*}
According to the Diamond Lemma, the set is a basis of
$\mathfrak{U}_{38}(I_{38})$. Then $\dim A$ = $\dim \mathfrak{U}_{38}(I_{38})$,
whence $A\cong \mathfrak{U}_{38}(I_{38})$.

The proofs of $(2)$, $(3)$ are similar to those of Proposition $\ref{14}\ (2),\ (3)$.

$(4)$ When $\lambda,\ \mu\neq 0$, $\Phi:\mathfrak{U}_{45}(1,1)\rightarrow \mathfrak{U}_{45}(\lambda,\mu)$, by $\Phi|_{H}={\rm id}$, $p_i\mapsto \sqrt{\lambda}p_i$,
$q_i\mapsto \sqrt{\mu}q_i$ for $i=1,2$.
\end{proof}
$\mathbf{Proof\ of\ Theorem\ B}$. Let $M$ be one of the Yetter-Drinfeld modules listed in Theorem A. Let $A$ be a finite-dimensional Hopf algebra
over $H$ such that its infinitesimal braiding is isomorphic to $M$.
By {\rm\cite [Theorem 6.1]{Z}}, gr$A\cong \mathcal{B}(M)\sharp H$.
By Propositions \ref{1},\ \ref{14},\ \ref{19},\ \ref{38}, we finish the
proof.

\begin{center}
$\mathbf{ACKNOWLEDGMENT}$
\end{center}

The authors would like to thank Dr. Rongchuan Xiong for his helpful
discussions.  The second author is supported by NSERC of Canada  the NSFC
(Grant No. 11931009). The third author is supported by the NSFC
(Grant No. 12171155) and in part by the Science and Technology
Commission of Shanghai Municipality (No. 22DZ2229014).

\end{document}